\long\def\delete#1{}
\newcommand{\diam}{\mathsf{diam}}  
\newcommand{\dist}{\mathsf{dist}}
\newtheorem{theorem}{Theorem}[section]
\newtheorem{lemma}[theorem]{Lemma} 
\newtheorem{corollary}[theorem]{Corollary}
\newtheorem{problem}[theorem]{Problem}
\def\qed{\hfill$\Box$\vspace{12pt}}
\def\x{\mathbf x}
\def\y{\mathbf y}
\def\b{\mathbf b}
\def\b0{{\bf 0}}
\def\b{\beta} 
\def\l{\lambda} 
\def\nl{\overline{\lambda}} 
\def\s{\sigma} 
\def\ns{\overline{\sigma}}
\def\sp{{\rm sp}} 
\def\mod{{\rm mod}} 
\date{}
\begin{document}

\title{Distance-constrained labellings of Cartesian products of graphs}
  
\author[1]{Anna Llad\'{o}\thanks{E-mail: \texttt{aina.llado@upc.edu}}}
\author[2]{Hamid Mokhtar\thanks{E-mail: \texttt{hamid.mokhtar@alumni.unimelb.edu.au}}}
\author[1]{Oriol Serra\thanks{E-mail: \texttt{oriol.serra@upc.edu}}}
\author[2]{Sanming Zhou\thanks{E-mail: \texttt{sanming@unimelb.edu.au}}}

\affil[1]{\small Department of Mathematics, Universitat Polit\`ecnica de Catalunya,  Barcelona, Spain}
\affil[2]{\small School of Mathematics and Statistics, The University of Melbourne, Parkville, VIC 3010, Australia}
 
\openup 0.8 \jot 
\maketitle

\begin{abstract}
An $L(h_1, h_2, \ldots, h_l)$-labelling of a graph $G$ is a mapping $\phi: V(G) \rightarrow \{0, 1, 2, \ldots\}$ such that for $1\le i\le l$ and each pair of vertices $u, v$ of $G$ at distance $i$, we have  $|\phi(u) - \phi(v)| \geq h_i$. The span of $\phi$ is the difference between the largest and smallest labels assigned to the vertices of $G$ by $\phi$, and $\l_{h_1, h_2, \ldots, h_l}(G)$ is defined as the minimum span over all $L(h_1, h_2, \ldots, h_l)$-labellings of $G$. 

In this paper we study $\l_{h, 1, \ldots, 1}$ for Cartesian products of graphs, where $(h, 1, \ldots, 1)$ is an $l$-tuple with $l \ge 3$. We prove that, under certain natural conditions, the value of this and three related invariants on a graph $H$ which is the Cartesian product of $l$ graphs attain a common lower bound. In particular, the chromatic number of the $l$-th power of $H$ equals this lower bound plus one. We further obtain a sandwhich theorem which extends the result to a family of subgraphs of $H$ which contain a certain subgraph of $H$. All these results apply in particular to the class of Hamming graphs: if $q_1\ge \cdots \ge q_d\ge 2$ and $3\le l\le d$ then the Hamming graph $H=H_{q_1,q_2,\ldots ,q_d}$ satisfies  $\lambda_{q_l,1,\ldots,1}(H) = q_1q_2\ldots q_l-1$ whenever $q_1q_2\ldots q_{l-1}>3(q_{l-1}+1)q_l\ldots q_d$. In particular, this settles a case of the open problem on the chromatic number of powers of the hypercubes.

{\bf Key words}: channel assignment; frequency assignment; distance-constrained labelling; chromatic number; Cartesian product of graphs; Hamming graph; graph power

{\bf AMS subject classification}: 05C78 
\end{abstract}

\section{Introduction}
\label{sec:introd}

Motivated by the frequency assignment problem \cite{Gamst, Hale} for communication networks, various optimal labelling problems for graphs involving distance conditions have been studied extensively since the 1980s. Among them is the following well-known distance labelling problem: Given a graph $G$ and nonnegative integers $h_1, h_2, \ldots, h_l$, determine the smallest positive integer $k$ with the property that each vertex of $G$ can be assigned a label from $\{0, 1, \ldots, k\}$ such that for $1 \le i \le l$ every pair of vertices at distance $i$ receive labels which differ by at least $h_i$. The $\l_{h_1, h_2, \ldots, h_l}$-number of $G$, denoted by $\l_{h_1, h_2, \ldots, h_l}(G)$, is defined as the smallest positive integer $k$ with this property. This parameter and several variants of it, especially for small $l$, have received much attention in the past more than three decades. In particular, a number of results on the $\l_{h_1, h_2}$-number have been produced by many researchers, especially in the case when $(h_1, h_2) = (2, 1)$, as one can find in the survey paper \cite{Cala}. Much work in this case was motivated by a conjecture of Griggs and Yeh \cite{GY} which asserts that $\l_{2, 1}(G) \le \Delta^2 $ for any graph $G$ with maximum degree $\Delta$. As far as we know, this conjecture is still open in its general form, though it has been confirmed in many special cases (see, for example, \cite{GM, GY, HRS, Kang, Sakai}). In recent years, the $\l_{h_1, h_2, h_3}$-number has also received considerable attention (see, for example, \cite{CFTV, FGKLP, KRZ, KLZ, Zhou-1}), but for $l > 3$ very little is known about the $\l_{h_1, h_2, \ldots, h_l}$-number. In general, it is difficult to determine the exact value of $\l_{h_1, h_2, \ldots, h_l}$ for a general graph. For example, for $l = 1$, we have $\l_{h_1}(G) = h_{1}(\chi(G)-1)$, and so determining $\l_{h_1}$ is equivalent to computing the chromatic number $\chi$. Answering a question posed in \cite{KRZ}, it was proved in \cite{FGKLP} that the problem of determining the $\l_{h_1, 1, 1}$-number is NP-complete even for trees. 

In this paper we study the $\l_{h, 1, \ldots, 1}$-number and three variants of it (see Subsection \ref{subsec:4p} for their definitions) for any graph $H$ which is the Cartesian product of $l$ non-trivial graphs, where $l \ge 3$ and $(h, 1, \ldots, 1)$ is an $l$-tuple with $h \ge 1$. We prove that under a certain condition these four invariants for $H$ all attain a common lower bound, and in particular the chromatic number of the $l$-th power of $H$ is equal to this lower bound plus $1$. We obtain further a sandwich theorem which says that under the same condition the same result holds for every subgraph of $H$ that contains a certain subgraph of $H$ as a subgraph. As corollaries we obtain that these results are true for Hamming graphs $H_{q_1, q_2, \ldots, q_d}$ such that $q_1 q_2 \ldots q_{l-1} > 3(\min\{q_1, \ldots, q_{l-1}\}+1)q_{l} \ldots q_{d}$ for some $l$ with $3 \le l < d$. We will give the precise statements of our results in Theorems \ref{thm:cart}--\ref{thm:sand-cart} and Corollaries \ref{cor:Lh21}--\ref{cor:Lhqd} after introducing relevant definitions and giving a brief review of related results in Subsections \ref{subsec:4p} and \ref{subsec:hm}, respectively. Our results give infinite families of graphs for which the values of $\l_{h, 1, \ldots, 1}$ and three variants of it can be computed exactly.

\subsection{Some basic terminology}

All graphs considered in the paper are finite, undirected and simple. Let $G$ be a graph with vertex set $V(G)$ and edge set $E(G)$. As usual denote by $\chi(G)$ the chromatic number of $G$ and call $|V(G)|$ the order of $G$. Denote by $\dist_G(u, v)$ the distance in $G$ between vertices $u, v$ of $G$. For an integer $l \ge 1$, the \emph{$l$-th power} $G^l$ of $G$ is the graph with vertex set $V(G)$ in which $u, v \in V(G)$ are adjacent if and only if $1 \le \dist_G(u,v) \le l$. We write $K \subseteq G$ to denote that $K$ is a subgraph of $G$.

The \emph{Cartesian product} of given graphs $G_1, G_2, \ldots, G_d$, denoted by $G_1 \Box G_2 \Box \cdots \Box G_d$, is the graph with vertex set $V(G_1) \times V(G_2) \times \cdots \times V(G_d)$ in which two vertices $(u_1, u_2, \ldots, u_d)$, $(v_1, v_2, \ldots, v_d)$ are adjacent if and only if there is exactly one $i \in \{1, 2, \ldots, d\}$ such that $u_i v_i \in E(G_i)$ and $u_j = v_j$ for all $j \in \{1, 2, \ldots, d\} \setminus \{i\}$. 

Given integers $q_1, q_2, \ldots, q_d \ge 2$, the \emph{Hamming graph} $H_{q_1, q_2, \ldots, q_d}$ is the Cartesian product $K_{q_1} \Box K_{q_2} \Box \cdots \Box K_{q_d}$ where, for an integer $q \ge 1$, $K_q$ denotes the complete graph with order $q$. Since the Cartesian product is commutative, without loss of generality we may assume that $q_1 \geq q_2 \ldots \geq q_d$. In the case when $q_1 = q_2 = \cdots = q_d = q$, we write $H(d,q)$ in place of $H_{q_1, q_2, \ldots, q_d}$. In particular, $H(d, 2)$ is the $d$-dimensional \emph{hypercube} $Q_d$.

\subsection{Distance-constrained labelling problems}
\label{subsec:4p}

Let $h_1, h_2, \ldots, h_l$ be nonnegative integers. An {\em $L(h_1, h_2, \ldots, h_l)$-labelling} of $G$ is a mapping $\phi$ from $V(G)$ to the set of nonnegative integers such that, for $i = 1, 2, \ldots, l$ and any pair of vertices $u, v \in V(G)$ with $\dist_G(u, v) = i$,  
\begin{equation} 
\label{eq:lc} |\phi(u) - \phi(v)| \geq h_i. 
\end{equation} 
The integer $\phi(u)$ is the \emph{label} of $u$ under $\phi$ and the \emph{span} of $\phi$, denoted by $\sp(G; \phi)$, is the difference between the largest and smallest labels assigned to the vertices of $G$ by $\phi$. Without loss of generality we may always assume that the smallest label used is $0$, so that
$$
\sp(G; \phi) = \max_{v \in V(G)} \phi(v).
$$ 
The \textit{$\l_{h_1, h_2, \ldots, h_l}$-number} of $G$ is defined \cite{GM-1, GY} as 
$$
\l_{h_1, h_2, \ldots, h_l}(G) = \min_{\phi} \sp(G; \phi),
$$ 
where the minimum is taken over all $L(h_1, h_2, \ldots, h_l)$-labellings of $G$. Equivalently, as stated in the beginning of this paper, $\l_{h_1, h_2, \ldots, h_l}(G)$ is the smallest positive integer $k$ such that an $L(h_1, h_2, \ldots, h_l)$-labelling of $G$ with span $k$ exists. 

The above notion of distance labelling originated from the frequency assignment problem \cite{Hale} for which the value $\l_{h_1, h_2, \ldots, h_l}(G)$ measures the minimum bandwidth required by a radio communication network modelled by $G$ under the constraints \eqref{eq:lc}. It is readily seen that  
$$
\chi(G^l) = \l_{1,1,\ldots,1}(G) + 1,
$$
where $(1, 1, \ldots, 1)$ is an $l$-tuple. Thus, from a pure graph-theoretical point of view, the $L(h_1, h_2, \ldots, h_l)$-labelling problem can be considered as a generalization of the classical vertex-colouring problem.  

An $L(h_1, h_2, \ldots, h_l)$-labelling $\phi$ of $G$ is said to be 
\emph{no-hole} (see, for example, \cite{CLZ, CLZ-1, R, SW, S}) if $\{\phi(v): v \in V(G)\}$ is a set of consecutive integers. Define $\nl_{h_1, h_2, \ldots, h_l}(G)$ to be the minimum span among all no-hole $L(h_1, h_2, \ldots, h_l)$-labellings of $G$, and $\infty$ if no such a labelling exists. As an example, we see that $\nl_{h_1,h_2,h_3}(K_q) = \infty$ for all $h_1 \ge 2$ and $q \ge 2$. 

The $L(h_1, h_2, \ldots, h_l)$-labelling problem and its no-hole version are a linear model in the sense that the $L_1$-metric is used to measure the span between two channels. The cyclic version of the $L(h_1, h_2, \ldots, h_l)$-labelling problem was studied in \cite{HLS} with a focus on small $l$. A mapping $\phi: V(G) \rightarrow \{0, 1, 2, \cdots, k-1\}$ is called a \emph{$C(h_1, h_2, \ldots, h_l)$-labelling} of $G$ with {\em span $k$} if, for $i = 1, 2, \ldots, l$ and any $u, v \in V(G)$ with $\dist_G(u, v) = i$,  
$$
|\phi(u) - \phi(v)|_{k} \ge h_i, 
$$
where 
$$
|x-y|_{k} = \min\{|x-y|, k-|x-y|\}
$$ 
is the {\em $k$-cyclic distance} between $x$ and $y$. A $C(h_1, h_2, \ldots, h_l)$-labelling of $G$ with span $k$ exists for sufficiently large $k$. The {\em $\s_{h_1, h_2, \ldots, h_l}$-number} of $G$, denoted by $\s_{h_1, h_2, \ldots, h_l}(G)$, is defined to be the minimum integer $k-1$ such that $G$ admits a $C(h_1, h_2, \ldots, h_l)$-labelling with span $k$. Note that $\s_{h_1, h_2, \ldots, h_l}(G)$ thus defined agrees with $\s(G; h_1, h_2, \ldots, h_l)$ defined in \cite{CLZ} but is one less than $\s(G; h_1, h_2, \ldots, h_l)$ used in \cite{HLS} and $c_{h_1, h_2, \ldots, h_l}(G)$ used in \cite{L}. As observed in \cite{Gamst, HLS}, this cyclic version allows the assignment of a set of channels $\phi(u), \phi(u)+k, \phi(u)+2k, \ldots$ to each transmitter $u$ when $G$ is viewed as a radio network with one transmitter placed at each vertex. 

A $C(h_1, h_2, \ldots, h_l)$-labelling $\phi$ of $G$ with span $k$ is \emph{no-hole} if $\{\phi(v): v \in V(G)\}$ is a set of consecutive integers $\mod~k$. Define $\ns_{h_1, h_2, \ldots, h_l}(G)$ to be the minimum $k-1$ such that $G$ admits a no-hole $C(h_1, h_2, \ldots, h_l)$-labelling of span $k$, and $\infty$ if no such a labelling exists. 

It can be verified that if $h_1 \ge h_2 \ge \cdots \ge h_l$ then the four invariants above are all \emph{monotonically increasing}; that is, $ \eta(H) \le \eta(G) $ for $\eta = \l_{h_1, h_2, \ldots, h_l}, \nl_{h_1, h_2, \ldots, h_l},  \s_{h_1, h_2, \ldots, h_l}, \ns_{h_1, h_2, \ldots, h_l}$ whenever $H$ is a subgraph of $G$.

\subsection{Distance-constrained labellings of Hamming graphs}
\label{subsec:hm}

This paper was motivated by distance-constrained labellings of Hamming graphs and hypercubes. As such let us mention several known results on this class of graphs. More results can be found in the short survey \cite{Zhou-2}.

In \cite[Theorem 3.7]{WGM} it was proved that, if $2^{n-1} \leq d \leq 2^n - t$ for some $t$ between $1$ and $n+1$, then 
\begin{equation}
\label{eq:21Qd}
\l_{2,1}(Q_d) \leq 2^n + 2^{n-t+1} - 2.
\end{equation}
In \cite[Theorem 3.1]{GMS} it was shown that, if $p$ is a prime and either $d \leq p$ and $r \geq 2$, or $d < p$ and $r = 1$, then
\begin{equation}
\label{eq:gms}
\l_{2,1}(H(d, p^r)) = p^{2r} - 1.
\end{equation} 
The $\l_{j,k}$-number of $H_{q_1, q_2}$ was determined in \cite{GMS} and results on $H_{q_1, q_2, q_3}$ can be found in \cite{GM,LLS}. 

In \cite{Zhou} a group-theoretic approach to $L(j,k)$-labelling Cayley graphs of Abelian groups was introduced. As an application it was proved \cite{Zhou} among other things that
$$
\l_{j,k}(H_{q_1, q_2, \ldots, q_d}) = (q_1 q_2 - 1)k
$$ 
for any $2k \ge j \ge k \ge 1$ if $q_1 > d \ge 2$, $q_2$ divides $q_1$ and is no less than $q_3, \ldots, q_d$, and every prime factor of $q_1$ is no less than $d$, generalizing \eqref{eq:gms} to a wide extent. In \cite{Zhou} it was also proved (as a corollary of a more general result) that $\l_{j,k}(Q_d) \leq 2^n \max\{k, \lceil j/2
\rceil\} + 2^{n-t}\min\{j-k, \lfloor j/2 \rfloor\}-j$, which yields 
\begin{equation}
\label{eq:jkQd}
\l_{j,k}(Q_d) \leq 2^{n} k + 2^{n -t} (j-k)-j
\end{equation}
when $2k \ge j$, where $n = 1 + \lfloor \log_{2} d \rfloor$ and $t = \min\{2^n - d - 1, n\}$. In the special case when $(j,k) = (2,1)$, the upper bound \eqref{eq:jkQd} gives exactly \eqref{eq:21Qd} (see \cite[p.990]{Zhou} for justification). In \cite{Zhou-1} lower and upper bounds on $\l_{h_1,h_2,h_3}(Q_d)$ were obtained using a group-theoretic approach, which recover the main result in \cite{KDP} in the special case when $(h_1,h_2,h_3) = (1,1,1)$. The problem of determining $\l_{1, \ldots, 1}(Q_d)$, or equivalently the chromatic number of powers of $Q_d$, has a long history but is still wide open. See \cite{KK, Zhou-1} for some background information and related results. One of the contributions of the present paper settles this problem for a range of dimensions (see Corollary \ref{cor:Lhqd}). 

Note that $\l_{j,k}(H_{q_1, q_2, \ldots, q_d}) \ge (q_1 q_2 - 1)k$ for $j \ge k$ as $H_{q_1, q_2, \ldots, q_d}$ contains $H_{q_1, q_2}$ as a subgraph and $H_{q_1, q_2}$ has order $q_1 q_2$ and diameter two. The following question was asked in \cite[Question 6.1]{Zhou} (see also \cite[Section 5]{CLZ-1}): Given integers $j$ and $k$ with $2k \ge j \ge k \ge 1$, for which integers $q_1 \ge q_2 \ge \cdots \ge q_d$ with $j/k \le q_1q_2 - \sum_{i=1}^{d}q_i + d$ do we have $\l_{j,k}(H_{q_1, q_2, \ldots, q_d}) = (q_1 q_2 - 1)k$? A partial answer to this question was given in \cite[Theorem 1.3]{CLZ-1}, where it was proved that, for $(j,k)=(2,1), (1,1)$, if $q_1$ is sufficiently large, namely $q_1 \ge d+n-1+ \sum_{i=2}^d (i-2) (q_i - 1)$, where $n$ is the largest subscript such that $q_2 = q_n$, then 
$$
\l_{j, k}(H_{q_1, q_2, \ldots, q_d}) = \nl_{j, k}(H_{q_1, q_2, \ldots, q_d}) = \ns_{j, k}(H_{q_1, q_2, \ldots, q_d}) = \s_{j, k}(H_{q_1, q_2, \ldots, q_d}) = q_1 q_2 - 1.
$$ 
This result inspired us to explore when a similar phenomenon occurs for $\lambda_{h,1,\ldots ,1}$, $\nl_{h,1,\ldots ,1}$, $\ns_{h,1,\ldots ,1}$ and $\sigma_{h,1,\ldots ,1}$ for Cartesian products of graphs. As will be seen in the next subsection, our main results in the present paper provide sufficient conditions for this to happen.

\subsection{Main results}
\label{subsec:main}

The first main result in this paper is as follows.  

\begin{theorem}
\label{thm:cart} 
Let $G_1, \ldots, G_{l-1}$ and $G$ be non-trivial graphs with orders $q_1, \ldots, q_{l-1}$ and $q$, respectively, and let $H = G_1 \Box \cdots \Box G_{l-1} \Box G$, where $l \ge 3$. Let $q_{l}$ be an integer with $1 \le q_l \le q$. Suppose that
$$
q_1 q_2 \ldots q_{l-1} > 3(\min\{q_1, \ldots, q_{l-1}\}+1)q
$$
and $H$ contains a subgraph $K$ with order $q_1 q_2 \ldots q_{l}$ and diameter at most $l$. Then for any integer $h$ with $1 \le h \le q_{l}$ we have
\begin{equation}
\label{eq:hH}
\lambda_{h,1,\ldots ,1} (H) = \nl_{h,1,\ldots ,1} (H) = \ns_{h,1,\ldots ,1} (H) = \sigma_{h,1,\ldots ,1} (H) = q_1 q_2 \ldots q_{l}-1,
\end{equation}
where $(h,1,\ldots,1)$ is an $l$-tuple. Moreover, there is a labelling of $H$ that is optimal for $\l_{h, 1, \ldots, 1}$, $\nl_{h, 1, \ldots, 1}$, $\ns_{h, 1, \ldots, 1}$ and $\s_{h, 1, \ldots, 1}$ simultaneously. In particular, we have
$$
\chi(H^l) = q_1 q_2 \ldots q_l
$$ 
and the same labelling gives rise to an optimal colouring of   $H^l$. 
\end{theorem}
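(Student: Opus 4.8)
The plan is to establish \eqref{eq:hH} by a combination of a lower bound argument and an explicit construction of a single labelling that simultaneously witnesses all four upper bounds. For the \textbf{lower bound}, note that $K \subseteq H$ has order $q_1q_2\cdots q_l$ and diameter at most $l$; hence in $K$ (and in $H$, since $H$ is connected and contains $K$ isometrically-up-to-diameter) any two of these $q_1q_2\cdots q_l$ vertices are at distance at most $l$, so an $L(h,1,\ldots,1)$-labelling must assign them pairwise-distinct labels, forcing span at least $q_1q_2\cdots q_l - 1$. The monotonicity remark in Subsection~\ref{subsec:4p} (valid since $h \ge 1 = 1 = \cdots$) and the elementary inequality $\lambda_{h,1,\ldots,1} \le \nl_{h,1,\ldots,1}$, $\sigma_{h,1,\ldots,1} \le \ns_{h,1,\ldots,1}$, together with $\sigma_{h,1,\ldots,1} \le \lambda_{h,1,\ldots,1}$ (a linear labelling of span $k$ is a cyclic labelling of span $k+1$ provided the wrap-around constraint is not violated, which one must check), reduce everything to producing \emph{one} labelling $\phi\colon V(H)\to\{0,1,\ldots,q_1\cdots q_l-1\}$ that is a no-hole $C(h,1,\ldots,1)$-labelling with span $q_1\cdots q_l$; such a $\phi$ is automatically a no-hole $L(h,1,\ldots,1)$-labelling with span $q_1\cdots q_l - 1$, and all four invariants then equal the lower bound.

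The core of the proof is therefore the \textbf{construction}. I would index $V(G_i)$ by $\ZZZ_{q_i}$ for $i=1,\ldots,l-1$ and pick $q_l$ distinguished vertices of $G$ indexed by $\ZZZ_{q_l}$, writing a generic vertex of $H$ as $(x_1,\ldots,x_{l-1},y)$. Define the label of such a vertex by a mixed-radix (Hamming-code-style) formula
\[
\phi(x_1,\ldots,x_{l-1},y) \;=\; \Big(\,c_1 x_1 + c_2 x_2 + \cdots + c_{l-1}x_{l-1} + c_l\,\iota(y)\,\Big) \bmod (q_1 q_2 \cdots q_l),
\]
where $\iota$ maps the distinguished vertices of $G$ bijectively onto $\ZZZ_{q_l}$ (and the remaining vertices of $G$ onto values reusing the same residue classes, chosen so the distance-$1$ constraint inside copies of $G$ is still met), and the multipliers $c_j$ are chosen so that the map is a bijection from $\ZZZ_{q_1}\times\cdots\times\ZZZ_{q_l}$ onto $\ZZZ_{q_1\cdots q_l}$ — the natural mixed-radix choice $c_1 = q_2 q_3\cdots q_l$, $c_2 = q_3\cdots q_l$, \ldots, $c_{l-1} = q_l$, $c_l = 1$, possibly composed with a suitable ``rotation'' (multiplication by a unit) to spread out differences. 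One then verifies the two types of constraints: (i) for vertices at distance exactly $1$ in $H$ — which differ in a single coordinate by an edge of some $G_i$ — the label difference is a nonzero multiple of the corresponding $c_i$, and the hypothesis $1 \le h \le q_l$ together with the size inequality $q_1\cdots q_{l-1} > 3(\min_i q_i + 1)q$ is exactly what is needed to guarantee this difference (cyclically) has absolute value at least $h$; (ii) for vertices at distance $i$ with $2 \le i \le l$ the required gap is only $1$, so it suffices that $\phi$ be injective on any ball of radius $l$, which follows from the bijectivity of the mixed-radix map restricted to $K$ and the fact that within distance $l$ in $H$ at most $l$ coordinates change.

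The \textbf{main obstacle} I anticipate is verifying the distance-$1$ constraint $|\phi(u)-\phi(v)|_{q_1\cdots q_l}\ge h$ for edges lying in the ``small'' factor $G$ and in the factors $G_i$ of smallest order, because there the multiplier $c_i$ is itself small and an edge can move the label by only $c_i, 2c_i, \ldots$; the inequality $q_1 q_2\cdots q_{l-1} > 3(\min\{q_1,\ldots,q_{l-1}\}+1)q$ must be used delicately — probably by applying a multiplicative ``rotation'' by a unit $r \in \ZZZ_{q_1\cdots q_l}^{\times}$ chosen via a counting/pigeonhole argument so that $r c_i$ stays $h$-separated from $0$ cyclically for \emph{every} relevant $i$ simultaneously, and the factor $3(\cdot+1)$ is the slack that makes such an $r$ exist. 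A secondary technical point is the extension of $\phi$ from the distinguished vertices of $G$ to all of $V(G)$ while keeping both the distance-$1$ constraint inside copies of $G$ and the no-hole/cyclic properties; this should follow because $h\le q_l \le q$ gives enough room, but it needs to be spelled out. Once the construction and these verifications are in hand, the displayed consequences $\chi(H^l) = q_1 q_2\cdots q_l$ and the optimal colouring of $H^l$ are immediate from $\chi(H^l) = \lambda_{1,\ldots,1}(H)+1 \le \lambda_{h,1,\ldots,1}(H)+1$ combined with the lower bound (a clique of size $q_1\cdots q_l$ in $H^l$ coming from $K$), the optimal colouring being the colour classes $\phi^{-1}(j)$.
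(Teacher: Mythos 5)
Your lower bound and your reduction to exhibiting a single no-hole cyclic labelling of span $q_1q_2\cdots q_l$ match the paper's skeleton (its chain of inequalities and its Lemma~\ref{lem:ham-cart}); note, though, that the inequality you quote is backwards (in general $\l_{h,1,\ldots,1}\le\s_{h,1,\ldots,1}$, not the reverse), which is harmless only because you construct the cyclic labelling directly. The genuine gap is in the construction itself. Your labelling is a fixed mixed-radix linear form in which the $G$-coordinate enters only through $\iota(y)$, with all non-distinguished vertices of $G$ ``reusing the same residue classes''. Since $|V(H)|=q_1\cdots q_{l-1}q$ generally exceeds the number $q_1\cdots q_l$ of available labels, labels must be reused, and the whole difficulty of the theorem is to arrange the reuse so that equal labels never occur within distance $l$. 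With your formula, two vertices $(x_1,\ldots,x_{l-1},y)$ and $(x_1,\ldots,x_{l-1},y')$ with $y\ne y'$, $\iota(y)\equiv\iota(y')\pmod{q_l}$ and $\dist_G(y,y')\le l$ receive the same label, violating the distance-$2,\ldots,l$ constraints; neither the bijectivity of the mixed-radix map on $K$ (which only concerns the $q_l$ distinguished vertices) nor a global multiplication by a unit can repair this, because $G$ is an arbitrary graph whose order $q$ may far exceed $q_l$. The point you defer as ``a secondary technical point'' --- extending the labelling from the distinguished vertices to all of $V(G)$ --- is precisely the heart of the proof, and it does not follow merely ``because $h\le q_l\le q$ gives enough room''.

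You also misplace where the hypothesis $q_1\cdots q_{l-1}>3(\min\{q_1,\ldots,q_{l-1}\}+1)q$ enters. In a mixed-radix scheme the distance-$1$ constraints coming from the factors $G_1,\ldots,G_{l-1}$ are automatic: such an edge changes the label by a nonzero multiple of $N_{i+1}\ge q_l\ge h$ that stays cyclically at least $q_l$ away from $0$, so no rotation by a unit is needed there. The paper instead makes the offset depend on the $G$-coordinate: it labels $(x_1,\ldots,x_{l-1},x)$ by $\sum_{i=1}^{l-1}\bigl((a_i(x)+x_i)\bmod q_i\bigr)N_{i+1}+(x\bmod q_l)$ and chooses the shift vectors $(a_1(x),\ldots,a_{l-1}(x))$ one vertex of $G$ at a time by induction: at each step at most $3q+3q_{l-1}q$ of the $q_1\cdots q_{l-1}$ candidate shifts are forbidden (those that would bring the new fibre's offset within $2q_l$ of an earlier one, or create a collision after a further change of at most one $G_i$-coordinate), and the stated hypothesis is exactly the counting bound guaranteeing that a good shift survives. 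Without such a per-vertex-of-$G$ greedy/pigeonhole choice over the full set of $q_1\cdots q_{l-1}$ shifts (not over units of $\ZZZ_{q_1\cdots q_l}$), your construction cannot be completed, so the proposal has a genuine gap at its central step.
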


Since by our assumption $H$ contains a subgraph with order $q_1 q_2 \ldots q_{l}$ and diameter at most $l$, it is easy to see that $q_1 q_2 \ldots q_{l}-1$ is a lower bound for each of the four invariants in \eqref{eq:hH}. Theorem \ref{thm:cart} asserts that actually these four invariants for $H$ all achieve this trivial lower bound. In Section \ref{sec:rem}, we will give a general construction to show that there are many graphs other than Hamming graphs which satisfy the conditions of Theorem \ref{thm:cart}.

Using Theorem \ref{thm:cart}, we obtain the following sandwich result, which is our second main result in the paper. The claimed optimal labelling in this sandwich theorem is the restriction of the above mentioned optimal labelling of $H$ to $V(X)$. Recall that we write $K \subseteq G$ when $K$ is a subgraph of a graph $G$.

\begin{theorem}[Sandwich Theorem]  
\label{thm:sand-cart} 
Under the conditions of Theorem \ref{thm:cart}, for every graph $X$ with $K \subseteq X \subseteq H$ and any integer $h$ with $1 \le h \le q_{l}$, we have
\begin{equation}
\label{eq:hX}
\lambda_{h,1,\ldots ,1} (X) = \nl_{h,1,\ldots ,1} (X) = \ns_{h,1,\ldots ,1} (X) = \sigma_{h,1,\ldots ,1} (X) = q_1 q_2 \ldots q_{l}-1,
\end{equation}
where $(h,1,\ldots,1)$ is an $l$-tuple. Moreover, there is a labelling of $X$ that is optimal for $\l_{h, 1, \ldots, 1}$, $\nl_{h, 1, \ldots, 1}$, $\ns_{h, 1, \ldots, 1}$ and $\s_{h, 1, \ldots, 1}$ simultaneously. In particular, we have
$$
\chi(X^l) = q_1 q_2 \ldots q_l
$$ 
and the same labelling gives rise to an optimal colouring of $X^l$. 
\end{theorem}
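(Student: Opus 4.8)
The plan is to deduce everything from Theorem~\ref{thm:cart} together with the monotonicity of the four invariants recorded at the end of Subsection~\ref{subsec:4p}. The key observation is that the hypotheses on $G_1,\ldots,G_{l-1},G,q_l,h$ in Theorem~\ref{thm:sand-cart} are exactly those of Theorem~\ref{thm:cart}, so we may invoke the conclusion~\eqref{eq:hH} for $H$ and, more importantly, the final sentence of that theorem: there is a single labelling $\phi$ of $H$ that is simultaneously optimal for $\lambda_{h,1,\ldots,1}$, $\nl_{h,1,\ldots,1}$, $\ns_{h,1,\ldots,1}$ and $\sigma_{h,1,\ldots,1}$, with common span $q_1q_2\ldots q_l-1$, and that the same $\phi$ induces an optimal colouring of $H^l$.

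First I would establish the lower bound in~\eqref{eq:hX}. Since $K\subseteq X$ and, by hypothesis, $K$ has order $q_1q_2\ldots q_l$ and diameter at most $l$, every two vertices of $K$ are at distance at most $l$ in $X$; hence in any $L(h,1,\ldots,1)$-labelling of $X$ the $q_1q_2\ldots q_l$ vertices of $K$ must receive pairwise distinct labels (the constraint $h_i\ge 1$ applies at every distance $1,\ldots,l$), forcing a span of at least $q_1q_2\ldots q_l-1$. The same argument, using $k$-cyclic distances and the fact that a clique of size $q_1q_2\ldots q_l$ in $X^l$ needs $q_1q_2\ldots q_l$ labels on a cycle of length $k$, gives $\sigma_{h,1,\ldots,1}(X)\ge q_1q_2\ldots q_l-1$ and likewise for the no-hole variants (a no-hole labelling is in particular a labelling, so its span is at least as large). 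Thus all four invariants of $X$ are bounded below by $q_1q_2\ldots q_l-1$, and $\chi(X^l)\ge q_1q_2\ldots q_l$ because $X^l$ contains a clique on $V(K)$.

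Next I would supply the matching upper bound by restriction. Let $\psi=\phi|_{V(X)}$ be the restriction to $V(X)$ of the optimal labelling $\phi$ of $H$ from Theorem~\ref{thm:cart}. Because $X\subseteq H$, distances can only grow: $\dist_X(u,v)\ge \dist_H(u,v)$ for all $u,v\in V(X)$; consequently, if $\dist_X(u,v)=i\le l$ then $\dist_H(u,v)\le i$, so $|\phi(u)-\phi(v)|\ge 1$ when $1\le\dist_H(u,v)\le l$ and $|\phi(u)-\phi(v)|\ge h$ when $\dist_H(u,v)=1$; a small case check (using that the tuple is $(h,1,\ldots,1)$ with $h\ge 1$, so the required separation is monotone in the distance) shows the $L(h,1,\ldots,1)$-constraints of $X$ are inherited, and similarly for the cyclic constraints with the fixed modulus $k=q_1q_2\ldots q_l$. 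Hence $\psi$ is a valid $L(h,1,\ldots,1)$-labelling of $X$ of span at most $q_1q_2\ldots q_l-1$, and it is still no-hole and still uses the full range $\{0,1,\ldots,q_1q_2\ldots q_l-1\}$ on the subset $V(K)\subseteq V(X)$ — so no hole is created by the restriction — giving the corresponding upper bounds for $\nl$, $\sigma$ and $\ns$ as well. Combining with the lower bound yields~\eqref{eq:hX}, and viewing $\psi$ as a colouring of $X^l$ gives $\chi(X^l)=q_1q_2\ldots q_l$ with $\psi$ optimal.

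The only subtlety — and the step I would write most carefully — is the verification that restriction to $V(X)$ genuinely preserves each of the four labelling conditions despite distances increasing; this is routine but must be done for all four variants at once and must use that $V(K)$ lies inside $V(X)$ so that the ``no-hole'' and ``full modulus'' properties are not destroyed. Everything else is an immediate transfer from Theorem~\ref{thm:cart} via monotonicity and the lower-bound clique argument, so there is no real obstacle; the content of the sandwich theorem is entirely carried by the strong simultaneous-optimality statement already proved for $H$.
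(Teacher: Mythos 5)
Your proposal is correct and follows essentially the same route as the paper: the lower bound comes from the $q_1q_2\ldots q_l$ vertices of $K$ being pairwise within distance $l$ in $X$, and the upper bounds come from restricting the simultaneously optimal (no-hole, cyclic) labelling of $H$ to $V(X)$, with the key point---also the paper's---that all $q_1q_2\ldots q_l$ labels already appear on $V(K)\subseteq V(X)$, so the no-hole and cyclic properties survive the restriction. The only cosmetic difference is that the paper obtains the $\lambda$ and $\sigma$ values by monotonicity sandwiched between $K$ and $H$ rather than via the restricted labelling, which changes nothing of substance.
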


Setting $G_1 = K_{q_1}, \ldots, G_{l-1} = K_{q_{l-1}}$ and $G = K_{q_l} \Box \ldots \Box K_{q_d}$ in Theorem \ref{thm:cart}, we have $H = G_1 \Box \cdots \Box G_{l-1} \Box G = H_{q_1, q_2, \ldots, q_d}$. Since $H_{q_1, q_2, \ldots, q_d}$ has subgraph $H_{q_1, q_2, \ldots, q_l}$ with order $q_1 q_2 \ldots q_{l}$ and diameter $l$, Theorem \ref{thm:cart} implies immediately the following result for Hamming graphs.  
  
\begin{corollary}
\label{cor:Lh21}
Let $q_1\ge q_2\ge \cdots\ge q_d$ be integers no less than $2$, and let $l$ be an integer with $3 \le l < d$. Let $H = H_{q_1, q_2, \ldots, q_d}$. Suppose that 
$$
q_1 q_2 \ldots q_{l-1} > 3(q_{l-1}+1)q_{l} \ldots q_{d}.
$$ 
Then for any integer $h$ with $1 \le h \le q_l$ we have
\begin{equation}
\label{eq:hamming}
\l_{h,1, \ldots, 1}(H) = \nl_{h,1, \ldots, 1}(H) = \ns_{h,1, \ldots, 1}(H) = \s_{h,1, \ldots, 1}(H) = q_1 q_2 \ldots q_l - 1, 
\end{equation}
where $(h,1,\ldots ,1)$ is an $l$-tuple. Moreover, there is a labelling of $H$ that is optimal for $\l_{h,1, \ldots, 1}$, $\nl_{h,1, \ldots, 1}$, $\ns_{h,1, \ldots, 1}$ and $\s_{h,1, \ldots, 1}$ simultaneously. In particular, we have
$$
\chi(H^l) = q_1 q_2 \ldots q_l
$$ 
and the same labelling gives rise to an optimal colouring of $H^l$. 
\end{corollary}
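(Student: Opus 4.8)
\textbf{Proof proposal for Corollary \ref{cor:Lh21}.}

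The plan is to deduce this directly from Theorem \ref{thm:cart} by making the right choice of factorisation of the Hamming graph. Set $G_1 = K_{q_1}, \ldots, G_{l-1} = K_{q_{l-1}}$ and $G = K_{q_l} \Box K_{q_{l+1}} \Box \cdots \Box K_{q_d}$; these are all non-trivial since each $q_i \ge 2$ and since $l < d$ guarantees $G$ has at least one factor. Then $H = G_1 \Box \cdots \Box G_{l-1} \Box G = H_{q_1, \ldots, q_d}$, the orders of the first $l-1$ factors are exactly $q_1, \ldots, q_{l-1}$, and $G$ has order $q = q_l q_{l+1} \cdots q_d$. Because $q_1 \ge q_2 \ge \cdots \ge q_{l-1}$, we have $\min\{q_1, \ldots, q_{l-1}\} = q_{l-1}$, so the hypothesis $q_1 q_2 \cdots q_{l-1} > 3(q_{l-1}+1) q_l \cdots q_d$ of the corollary is precisely the hypothesis $q_1 q_2 \cdots q_{l-1} > 3(\min\{q_1, \ldots, q_{l-1}\} + 1) q$ of Theorem \ref{thm:cart}. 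We also take $q_l$ (the integer called $q_l$ in Theorem \ref{thm:cart}) to be the Hamming parameter $q_l$, which satisfies $1 \le q_l \le q = q_l \cdots q_d$.

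Next I would verify the remaining hypothesis of Theorem \ref{thm:cart}, namely that $H$ contains a subgraph $K$ of order $q_1 q_2 \cdots q_l$ and diameter at most $l$. Take $K = H_{q_1, q_2, \ldots, q_l}$, realised inside $H_{q_1, \ldots, q_d}$ as the sub-Hamming-graph obtained by fixing each of the last $d-l$ coordinates at a chosen value; this is an isometric (hence induced) subgraph in the Cartesian product. Its order is $q_1 q_2 \cdots q_l$ and its diameter is $l$ (it has $l$ complete-graph factors, each contributing $1$ to the diameter), so in particular the diameter is at most $l$. All hypotheses of Theorem \ref{thm:cart} are therefore met.

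Applying Theorem \ref{thm:cart} then yields, for every integer $h$ with $1 \le h \le q_l$, that
$$
\lambda_{h,1,\ldots,1}(H) = \nl_{h,1,\ldots,1}(H) = \ns_{h,1,\ldots,1}(H) = \sigma_{h,1,\ldots,1}(H) = q_1 q_2 \cdots q_l - 1,
$$
that there is a single labelling of $H$ optimal for all four invariants simultaneously, and that $\chi(H^l) = q_1 q_2 \cdots q_l$ with the same labelling giving an optimal colouring of $H^l$. This is exactly the statement of Corollary \ref{cor:Lh21}.

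Since this corollary is a pure specialisation of Theorem \ref{thm:cart}, there is essentially no obstacle: the only points requiring (routine) care are checking that all the chosen factor graphs are non-trivial (which uses $q_d \ge 2$ and $l < d$), that the $\min$ in Theorem \ref{thm:cart} collapses to $q_{l-1}$ under the ordering assumption so the two inequalities coincide, and that the canonical copy of $H_{q_1, \ldots, q_l}$ sits inside $H_{q_1, \ldots, q_d}$ with the claimed order and diameter. The real mathematical content lives entirely in Theorem \ref{thm:cart}. \qed
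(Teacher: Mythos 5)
Your proposal is correct and is essentially identical to the paper's own deduction: it sets $G_1 = K_{q_1}, \ldots, G_{l-1} = K_{q_{l-1}}$, $G = K_{q_l} \Box \cdots \Box K_{q_d}$, notes that the ordering makes the $\min$ equal to $q_{l-1}$ so the hypotheses match, takes $K = H_{q_1,\ldots,q_l}$ (order $q_1\cdots q_l$, diameter $l$), and applies Theorem \ref{thm:cart}. No gaps.
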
  

Similarly, Theorem \ref{thm:sand-cart} implies the following result, in which the claimed optimal labelling is the restriction of the above mentioned optimal labelling of $H_{q_1, q_2, \ldots, q_d}$ to $V(X)$. 

\begin{corollary}[Sandwich Theorem for Hamming Graphs]  
\label{cor:sand-gen}
Under the conditions of Corollary \ref{cor:Lh21}, for every graph $X$ such that $H_{q_1, q_2, \ldots, q_l}\subseteq X \subseteq H_{q_1, q_2, \ldots, q_d}$ and any integer $h$ with $1 \le h \le q_l$, we have   
$$
\l_{h,1,\ldots,1}(X) = \nl_{h,1,\ldots,1}(X) = \ns_{h,1,\ldots,1}(X) = \s_{h,1,\ldots,1}(X) = q_1 q_2 \ldots q_l - 1,
$$
where $(h,1,\ldots ,1)$ is an $l$-tuple. Moreover, there is a labelling of $X$ that is optimal for $\l_{h,1,\ldots,1}$, $\nl_{h,1,\ldots,1}$, $\ns_{h,1,\ldots,1}$ and $\s_{h,1,\ldots,1}$ simultaneously. In particular, we have
$$
\chi(X^l) = q_1 q_2 \ldots q_l
$$ 
and the same labelling gives rise to an optimal colouring of $X^l$.   
\end{corollary}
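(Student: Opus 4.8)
The plan is to obtain Corollary~\ref{cor:sand-gen} as an immediate specialisation of Theorem~\ref{thm:sand-cart}. First I would instantiate the factor graphs of Theorem~\ref{thm:cart} by setting $G_i = K_{q_i}$ for $1 \le i \le l-1$ and $G = K_{q_l} \Box K_{q_{l+1}} \Box \cdots \Box K_{q_d}$; then $H := G_1 \Box \cdots \Box G_{l-1} \Box G$ equals $H_{q_1, q_2, \ldots, q_d}$ by commutativity and associativity of the Cartesian product. Here $G_i$ has order $q_i$, while $G$ has order $q := q_l q_{l+1} \cdots q_d$; since $l < d$ and each $q_j \ge 2$, we have $q \ge 2 q_l > q_l \ge 1$, so the integer playing the role of ``$q_l$'' in Theorem~\ref{thm:cart} can be taken to be our $q_l$, and every $h$ with $1 \le h \le q_l$ is admissible.

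Next I would check that the two hypotheses of Theorem~\ref{thm:cart} hold in this situation. Since $q_1 \ge q_2 \ge \cdots \ge q_{l-1}$, we have $\min\{q_1, \ldots, q_{l-1}\} = q_{l-1}$, so the required inequality $q_1 q_2 \cdots q_{l-1} > 3(\min\{q_1, \ldots, q_{l-1}\}+1)\,q$ is precisely the hypothesis $q_1 q_2 \cdots q_{l-1} > 3(q_{l-1}+1)\,q_l \cdots q_d$ of the corollary. For the subgraph requirement, I would take $K$ to be the canonical ``layer'' of $H = H_{q_1, \ldots, q_d}$ obtained by fixing the last $d - l$ coordinates to arbitrarily chosen vertices of $K_{q_{l+1}}, \ldots, K_{q_d}$; this $K$ is isomorphic to $H_{q_1, q_2, \ldots, q_l}$, has order $q_1 q_2 \cdots q_l$, and, since distances between its vertices in $H$ coincide with distances in $H_{q_1, \ldots, q_l}$ and distances add over Cartesian factors, it has diameter exactly $l$ in $H$. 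Hence all hypotheses of Theorem~\ref{thm:cart}, and therefore of Theorem~\ref{thm:sand-cart}, are satisfied; this $K$ is the one meant by ``$H_{q_1, q_2, \ldots, q_l}$'' in the statement of the corollary.

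Finally, for any graph $X$ with $H_{q_1, q_2, \ldots, q_l} \subseteq X \subseteq H_{q_1, q_2, \ldots, q_d}$, i.e.\ $K \subseteq X \subseteq H$, I would simply quote Theorem~\ref{thm:sand-cart}: it yields \eqref{eq:hX} for $X$, a single labelling of $X$ that is optimal for $\lambda_{h,1,\ldots,1}$, $\nl_{h,1,\ldots,1}$, $\ns_{h,1,\ldots,1}$ and $\sigma_{h,1,\ldots,1}$ simultaneously --- namely the restriction to $V(X)$ of the optimal labelling of $H$ provided by Theorem~\ref{thm:cart} --- and $\chi(X^l) = q_1 q_2 \cdots q_l$, realised by that same labelling. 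I do not expect any real obstacle: the entire content of the corollary is already contained in Theorem~\ref{thm:sand-cart}, and the only non-automatic points are the elementary simplification $\min\{q_1,\ldots,q_{l-1}\} = q_{l-1}$ and the verification that the canonical sub-Hamming-graph $H_{q_1,\ldots,q_l}$ inside $H_{q_1,\ldots,q_d}$ has order $q_1 q_2 \cdots q_l$ and diameter $l$, both of which are routine.
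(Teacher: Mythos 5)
Your proposal is correct and follows exactly the route the paper itself takes: specialise Theorem~\ref{thm:sand-cart} by setting $G_i = K_{q_i}$ for $1\le i\le l-1$ and $G = K_{q_l}\Box\cdots\Box K_{q_d}$, note that $\min\{q_1,\ldots,q_{l-1}\}=q_{l-1}$ turns the hypothesis of Theorem~\ref{thm:cart} into that of Corollary~\ref{cor:Lh21}, and take $K$ to be a canonical layer isomorphic to $H_{q_1,\ldots,q_l}$, of order $q_1q_2\cdots q_l$ and diameter $l$, with the optimal labelling of $X$ being the restriction of the optimal labelling of $H_{q_1,\ldots,q_d}$ to $V(X)$. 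No gaps; the verification details you supply (order of $G$, admissibility of $q_l\le q$, the diameter of the layer) are the same routine checks the paper leaves implicit.
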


Corollary \ref{cor:Lh21} implies the following result for Hamming graphs $H(d,q)$ and the $d$-dimensional hypercube $Q_d = H(d, 2)$.

\begin{corollary}
\label{cor:Lhqd}
Let $d, q$ and $l$ be integers such that $d \ge 6$, $q \ge 2$ and 
\begin{equation}
\label{eq:dql}
(d + 4 + \max\{4-q, 0\})/2 \le l < d.
\end{equation}
Then for any integer $h$ with $1 \le h \le q$ we have
$$
\l_{h,1,\ldots ,1} (H(d,q))=q^{l}-1,
$$
where $(h, 1, \ldots, 1)$ is an $l$-tuple. In particular, if $(d+6)/2 \le l < d$, then for $h = 1, 2$, 
\begin{equation}
\label{eq:Qd}
\l_{h,1,\ldots ,1} (Q_d)=2^{l}-1.
\end{equation}
\end{corollary}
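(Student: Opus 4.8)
The plan is to derive this corollary from Corollary \ref{cor:Lh21} by setting $q_1 = q_2 = \cdots = q_d = q$, so that $H(d,q) = H_{q_1, \ldots, q_d}$ and the conclusion \eqref{eq:hamming} reads $\lambda_{h,1,\ldots,1}(H(d,q)) = q^l - 1$ for every $h$ with $1 \le h \le q$. The only thing that needs checking is that the hypothesis of Corollary \ref{cor:Lh21}, namely $q_1 q_2 \ldots q_{l-1} > 3(q_{l-1}+1) q_l \ldots q_d$, holds under the assumption \eqref{eq:dql}. With all $q_i = q$ this inequality becomes $q^{l-1} > 3(q+1) q^{d-l}$, equivalently $q^{2l - d - 1} > 3(q+1)$. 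So first I would rewrite the target inequality in this clean exponential form.

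The next step is to bound $3(q+1)$ by a power of $q$. For $q \ge 4$ we have $3(q+1) \le q^2$ (indeed $q^2 - 3q - 3 \ge 16 - 12 - 3 > 0$), while for $q = 2$ one checks $3(q+1) = 9 < 16 = q^4$ and for $q = 3$ one checks $3(q+1) = 12 < 27 = q^3$; in all cases $3(q+1) < q^{2 + \max\{4-q,0\}}$, with the exponent being $2$ when $q \ge 4$, $3$ when $q = 3$, and $4$ when $q = 2$. Hence it suffices to have $q^{2l - d - 1} \ge q^{3 + \max\{4-q,0\}}$ — I would be slightly careful to turn the strict inequality into a safe one by using a strictly larger exponent, which is why the statement carries $d + 4$ rather than $d + 3$. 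Since $q \ge 2$, this exponent comparison reduces to $2l - d - 1 \ge 3 + \max\{4-q,0\}$, i.e. $2l \ge d + 4 + \max\{4-q,0\}$, which is exactly the lower bound on $l$ in \eqref{eq:dql}. The upper bound $l < d$ in \eqref{eq:dql} matches the requirement $l < d$ in Corollary \ref{cor:Lh21}, and the condition $d \ge 6$ together with $2l \ge d+4$ and $l < d$ forces $l \ge 3$ (since $l \ge (d+4)/2 \ge 5 > 3$ when $q \ge 4$, and $l \ge (d+6)/2 \ge 6$ when $q = 2$), so the constraint $3 \le l < d$ of Corollary \ref{cor:Lh21} is met. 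Then Corollary \ref{cor:Lh21} applies and gives $\lambda_{h,1,\ldots,1}(H(d,q)) = q^l - 1$.

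Finally, the hypercube statement \eqref{eq:Qd} is the special case $q = 2$: then $\max\{4-q,0\} = 2$, so \eqref{eq:dql} becomes $(d+6)/2 \le l < d$, and $Q_d = H(d,2)$, $h \in \{1,2\} = \{1,\ldots,q\}$, so $\lambda_{h,1,\ldots,1}(Q_d) = 2^l - 1$ follows directly. I do not expect any real obstacle here: the entire argument is an elementary manipulation of the inequality $q^{l-1} > 3(q+1)q^{d-l}$, and the only mildly delicate point is accounting for the small values $q = 2, 3$ via the $\max\{4-q,0\}$ correction term and making sure the strict-versus-non-strict passage is handled so that the stated bound on $l$ is genuinely sufficient.
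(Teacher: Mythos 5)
Your overall route is the same as the paper's: specialize Corollary \ref{cor:Lh21} to $q_1=\cdots=q_d=q$ and check its hypothesis. But your reduction of that hypothesis is off by one: the product $q_l q_{l+1}\cdots q_d$ has $d-l+1$ factors, so with all $q_i=q$ the condition $q_1\cdots q_{l-1}>3(q_{l-1}+1)q_l\cdots q_d$ reads $q^{l-1}>3(q+1)\,q^{d-l+1}$, i.e.\ $q^{2l-d-2}>3(q+1)$, not $q^{2l-d-1}>3(q+1)$ as you wrote. Since your version has the larger exponent, the inequality you verify is weaker than what Corollary \ref{cor:Lh21} requires, so as written the hypothesis is not actually checked. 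Relatedly, your explanation that the statement carries $d+4$ rather than $d+3$ as a ``safety margin'' for strictness is spurious: the extra $1$ is consumed by the miscounted factor of $q$, and with the correct exponent a bound of $d+3$ would genuinely fail (for $q\ge 4$ it only gives $q^{1}>3(q+1)$, which is false), so $d+4+\max\{4-q,0\}$ is exactly what the argument needs, not a buffer.

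The repair is immediate and uses only what you already have: from \eqref{eq:dql}, $2l-d-2\ge 2+\max\{4-q,0\}$, and your casework shows $3(q+1)<q^{2+\max\{4-q,0\}}$ (namely $q^2>3(q+1)$ for $q\ge 4$, $27>12$ for $q=3$, $16>9$ for $q=2$), whence $q^{2l-d-2}\ge q^{2+\max\{4-q,0\}}>3(q+1)$, which is the needed hypothesis. Your remaining checks are fine: $3\le l<d$ follows from $d\ge 6$ and $2l\ge d+4$, the labels satisfy $1\le h\le q=q_l$, and the $q=2$ specialization gives \eqref{eq:Qd}. With this correction your proof is complete and coincides with the paper's intended derivation.
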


Note that \eqref{eq:Qd} requires $d \ge 8$. This is so because for $q = 2$ the inequalities in \eqref{eq:dql} cannot be true unless $d \ge 8$. Similarly, for $q = 3$, \eqref{eq:dql} requires $d \ge  7$. In the general case when $q \ge 4$, \eqref{eq:dql} says that $l$ is between $(d+4)/2$ and $d-1$. 

Theorems \ref{thm:cart} and \ref{thm:sand-cart} will be proved in Section \ref{sec:cart} after a short preparation in the next section. The paper concludes in Section \ref{sec:rem} with a construction illustrating the wide applicability of Theorems \ref{thm:cart} and \ref{thm:sand-cart}, some final remarks assessing the strength of the sufficient conditions in Theorem \ref{thm:cart} and two open problems.

\section{Preliminaries}
\label{sec:prel}

The following inequalities follow immediately from related definitions. 

\begin{lemma}
\label{lem:chain} 
Let $G$ be a graph and let $h_1 \ge h_2 \ge \cdots \ge h_l$ be nonnegative integers. Then 
\begin{equation} 
\label{eq:ls} 
\l_{h_1, h_2, \ldots, h_l}(G) \le \s_{h_1, h_2, \ldots, h_l}(G) \le \l_{h_1, h_2, \ldots, h_l}(G) + h_1 - 1~\mbox{(\cite{HLS})} 
\end{equation}
\begin{equation} 
\label{eq:nls} \l_{h_1, h_2, \ldots, h_l}(G) \le \nl_{h_1, h_2, \ldots, h_l}(G) \le \ns_{h_1, h_2, \ldots, h_l}(G) 
\end{equation} 
\begin{equation} 
\label{eq:nls-20} \s_{h_1, h_2, \ldots, h_l}(G) \le \ns_{h_1, h_2, \ldots, h_l}(G). 
\end{equation} 
\end{lemma}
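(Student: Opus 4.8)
The plan is to read each of the three chains straight off the definitions, the only point requiring any care being the elementary relation between the $k$-cyclic distance and the ordinary distance: for all integers $x,y$ and all $k\ge 1$ one has $|x-y|_{k}=\min\{|x-y|,\,k-|x-y|\}\le|x-y|$. Consequently every $C(h_1,\ldots,h_l)$-labelling is automatically an $L(h_1,\ldots,h_l)$-labelling, while the reverse passage costs a little and is governed by $h_1$ precisely because $h_1\ge h_2\ge\cdots\ge h_l$.

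For the chain \eqref{eq:ls}, to get $\l_{h_1,\ldots,h_l}(G)\le\s_{h_1,\ldots,h_l}(G)$ I would take an optimal $C(h_1,\ldots,h_l)$-labelling $\phi$, of span $k=\s_{h_1,\ldots,h_l}(G)+1$; its labels lie in $\{0,\ldots,k-1\}$ and, by the displayed inequality, $|\phi(u)-\phi(v)|\ge|\phi(u)-\phi(v)|_{k}\ge h_i$ whenever $\dist_G(u,v)=i$, so $\phi$ is an $L(h_1,\ldots,h_l)$-labelling of span at most $k-1$. For $\s_{h_1,\ldots,h_l}(G)\le\l_{h_1,\ldots,h_l}(G)+h_1-1$ I would start from an optimal $L(h_1,\ldots,h_l)$-labelling $\phi$ with labels in $\{0,\ldots,s\}$, where $s:=\l_{h_1,\ldots,h_l}(G)$, and check that it is a $C(h_1,\ldots,h_l)$-labelling of span $k:=s+h_1$: if $\dist_G(u,v)=i$ then $|\phi(u)-\phi(v)|\ge h_i$ and also $k-|\phi(u)-\phi(v)|\ge k-s=h_1\ge h_i$, hence $|\phi(u)-\phi(v)|_{k}\ge h_i$; thus $\s_{h_1,\ldots,h_l}(G)\le k-1$.

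For \eqref{eq:nls} and \eqref{eq:nls-20}, the bounds $\l_{h_1,\ldots,h_l}(G)\le\nl_{h_1,\ldots,h_l}(G)$ and $\s_{h_1,\ldots,h_l}(G)\le\ns_{h_1,\ldots,h_l}(G)$ are immediate, since the minimum defining the no-hole invariant ranges over a subfamily of the labellings defining the ordinary one. For $\nl_{h_1,\ldots,h_l}(G)\le\ns_{h_1,\ldots,h_l}(G)$, which is trivial when the right-hand side is $\infty$, I would take an optimal no-hole $C(h_1,\ldots,h_l)$-labelling $\phi$ of span $k=\ns_{h_1,\ldots,h_l}(G)+1$, whose label set is a block of $m+1$ consecutive residues modulo $k$ with $m+1\le k$; replacing $\phi$ by $(\phi+c)\bmod k$ for a suitable constant $c$ preserves every cyclic distance, hence yields another no-hole $C$-labelling of span $k$, and I may choose $c$ so that the block becomes $\{0,1,\ldots,m\}$, a genuine block of consecutive integers. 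Then, as before, $|\phi(u)-\phi(v)|\ge|\phi(u)-\phi(v)|_{k}\ge h_i$ whenever $\dist_G(u,v)=i$, so $\phi$ is now a no-hole $L(h_1,\ldots,h_l)$-labelling of span $m\le k-1$, giving $\nl_{h_1,\ldots,h_l}(G)\le k-1$.

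The hard part here is essentially nonexistent: the lemma is bookkeeping with the definitions. The two things to watch are the off-by-one convention in the cyclic setting (a $C$-labelling of ``span $k$'' contributes $k-1$ to $\s$ and $\ns$) and the rotation step that converts a block of consecutive \emph{residues} into a block of consecutive \emph{integers}, which is exactly what makes the resulting linear labelling no-hole for the second inequality in \eqref{eq:nls}.
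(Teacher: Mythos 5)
Your proof is correct and is essentially the argument the paper has in mind: the paper gives no proof of Lemma~\ref{lem:chain}, asserting that the inequalities follow immediately from the definitions (with \eqref{eq:ls} credited to \cite{HLS}), and your bookkeeping --- comparing $|\cdot|_{k}$ with the ordinary absolute difference, the off-by-one convention for $\s$ and $\ns$, and the rotation turning a block of consecutive residues into consecutive integers for $\nl \le \ns$ --- is exactly that routine verification. The only caveat, which the statement itself shares, is that the upper bound in \eqref{eq:ls} implicitly requires $h_1 \ge 1$, which is the intended setting.
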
 

\begin{corollary}
\label{cor:chain}
Let $G$ be a graph and let $h_1 \ge h_2 \ge \cdots \ge h_l$ be nonnegative integers. If $\l_{h_1, h_2, \ldots, h_l}(G)$ $=$ $\ns_{h_1, h_2, \ldots, h_l}(G)$, then 
$$
\l_{h_1, h_2, \ldots, h_l}(G) = \nl_{h_1, h_2, \ldots, h_l}(G) = \ns_{h_1, h_2, \ldots, h_l}(G) = \s_{h_1, h_2, \ldots, h_l}(G)
$$ 
and any optimal no-hole $C(h_1, h_2, \ldots, h_l)$-labelling of $G$ is optimal for $\l_{h_1, h_2, \ldots, h_l}$, $\nl_{h_1, h_2, \ldots, h_l}$, $\ns_{h_1, h_2, \ldots, h_l}$ and $\s_{h_1, h_2, \ldots, h_l}$ simultaneously. 
\end{corollary}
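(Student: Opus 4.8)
The plan is to chain together the three inequalities of Lemma \ref{lem:chain} and observe that the hypothesis $\l_{h_1,\ldots,h_l}(G) = \ns_{h_1,\ldots,h_l}(G)$ forces every intermediate quantity to be squeezed to the same value. First I would record, from \eqref{eq:nls}, that
$$
\l_{h_1,\ldots,h_l}(G) \le \nl_{h_1,\ldots,h_l}(G) \le \ns_{h_1,\ldots,h_l}(G),
$$
so under the hypothesis the outer terms coincide and hence $\nl_{h_1,\ldots,h_l}(G)$ equals both. Next, from the left inequality of \eqref{eq:ls} together with \eqref{eq:nls-20}, we have
$$
\l_{h_1,\ldots,h_l}(G) \le \s_{h_1,\ldots,h_l}(G) \le \ns_{h_1,\ldots,h_l}(G),
$$
and again the two ends agree, so $\s_{h_1,\ldots,h_l}(G)$ equals the common value as well. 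This establishes the displayed string of equalities; there is no real obstacle here, as it is a pure sandwiching argument, and monotonicity of the $h_i$ is only needed to invoke Lemma \ref{lem:chain}.

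For the statement about an optimal labelling, I would start from an optimal no-hole $C(h_1,\ldots,h_l)$-labelling $\phi$ of $G$, which by definition has span $k = \ns_{h_1,\ldots,h_l}(G) + 1$ and uses a set of labels that are consecutive integers modulo $k$. The key point to verify is that such a $\phi$, viewed as an ordinary labelling into $\{0,1,\ldots,k-1\}$, is automatically a valid no-hole $L(h_1,\ldots,h_l)$-labelling of span $\ns_{h_1,\ldots,h_l}(G) = \l_{h_1,\ldots,h_l}(G)$: indeed $|x-y| \ge |x-y|_k$ for all $x,y$, so every cyclic constraint implies the corresponding linear constraint, and the no-hole property is inherited because a set of consecutive residues modulo $k$ that happens to occupy all of $\{0,\ldots,k-1\}$—which it must, since its size equals the span plus one forces the label set to be exactly $\{0,1,\ldots,k-1\}$ after the earlier equalities pin the linear and cyclic spans together—is a set of consecutive integers. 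Since $\phi$ achieves the minimum possible span for each of the four invariants simultaneously (each minimum being $\l_{h_1,\ldots,h_l}(G)$ by the first part), it is optimal for all four at once.

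The only subtlety I anticipate is the no-hole bookkeeping: one must be careful that ``consecutive modulo $k$'' and ``the label set has size span$+1$'' together give ``consecutive integers in $\{0,\ldots,k-1\}$'' in the linear sense, rather than a block of residues that wraps around $0$. This is handled by the observation that once the linear span under $\phi$ is forced to equal $k-1$, the labels must include both $0$ and $k-1$ (after the harmless normalization subtracting the minimum label), and a block of consecutive residues modulo $k$ containing both $0$ and $k-1$ and of size $k$ is all of $\{0,\ldots,k-1\}$. Everything else is immediate from the definitions recalled in Subsection \ref{subsec:4p}. \qed
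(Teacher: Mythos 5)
Your derivation of the chain of equalities is correct and is exactly the paper's argument: \eqref{eq:nls} squeezes $\nl_{h_1,\ldots,h_l}(G)$ between $\l_{h_1,\ldots,h_l}(G)$ and $\ns_{h_1,\ldots,h_l}(G)$, and the left inequality of \eqref{eq:ls} together with \eqref{eq:nls-20} squeezes $\s_{h_1,\ldots,h_l}(G)$ to the same value. The paper disposes of the optimality assertion with the single word ``obviously'', so your attempt to spell it out goes beyond the written proof, but as it stands it contains a circular step.

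The gap is in your claim that the label set of an optimal no-hole $C(h_1,\ldots,h_l)$-labelling $\phi$, of span $k=\ns_{h_1,\ldots,h_l}(G)+1$, must be all of $\{0,1,\ldots,k-1\}$. You first justify this by saying ``its size equals the span plus one'', which is precisely the no-hole \emph{linear} property you are trying to establish (the definition only guarantees a set of consecutive integers modulo $k$, possibly a proper arc), and then by invoking ``a block of consecutive residues modulo $k$ containing both $0$ and $k-1$ and of size $k$'', where the hypothesis ``of size $k$'' is never proved. A wrap-around block such as $\{k-2,\,k-1,\,0,\,1\}$ is consecutive modulo $k$, contains $0$ and $k-1$, gives linear span $k-1$, and is not a set of consecutive integers, so your argument does not exclude it. The exclusion does hold, but it needs one extra idea, e.g.\ a rotation: if the label set is a consecutive block modulo $k$ of size $s$ beginning at $a$, set $\phi'(v)=(\phi(v)-a)\bmod k$; cyclic distances are invariant under this shift, so $\phi'$ satisfies all $C(h_1,\ldots,h_l)$ constraints, hence (since $|x-y|\ge |x-y|_k$) all $L(h_1,\ldots,h_l)$ constraints, and its label set is $\{0,1,\ldots,s-1\}$, i.e.\ $\phi'$ is a no-hole $L(h_1,\ldots,h_l)$-labelling of span $s-1$. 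Therefore $s-1\ge \nl_{h_1,\ldots,h_l}(G)=k-1$, forcing $s=k$; consequently the label set of $\phi$ is exactly $\{0,1,\ldots,k-1\}$, so $\phi$ is itself a no-hole $L(h_1,\ldots,h_l)$-labelling of span $k-1=\l_{h_1,\ldots,h_l}(G)$, and its simultaneous optimality for the four invariants follows as you intended.
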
 

\begin{proof} 
Since $\l_{h_1, h_2, \ldots, h_l}(G) = \ns_{h_1, h_2, \ldots, h_l}(G)$, by (\ref{eq:nls}) we have $\l_{h_1, h_2, \ldots, h_l}(G) = \nl_{h_1, h_2, \ldots, h_l}(G) = \ns_{h_1, h_2, \ldots, h_l}(G)$. Thus the first inequality in (\ref{eq:ls}) becomes $\ns_{h_1, h_2, \ldots, h_l}(G) \le \s_{h_1, h_2, \ldots, h_l}(G)$. This together with (\ref{eq:nls-20}) implies $\ns_{h_1, h_2, \ldots, h_l}(G) = \s_{h_1, h_2, \ldots, h_l}(G)$. Obviously, the statement about optimality holds. 
\qed
\end{proof} 

\begin{lemma} 
\label{lem:ham-cart}  
Let $G_1, \ldots, G_{l-1}$ and $G$ be non-trivial graphs with orders $q_1, \ldots, q_{l-1}$ and $q$, respectively, where $l \ge 2$. Let $H = G_1 \Box \cdots \Box G_{l-1} \Box G$. Let $h_1 \ge h_2 \ge \cdots \ge h_l$ be positive integers, and let $q_l$ be an integer with $1 \le q_l \le q$. If $\ns_{h_1, h_2, \ldots, h_l}(H) \le q_1 q_2 \cdots q_l - 1$ and $H$ contains a subgraph with order $q_1 q_2 \ldots q_{l}$ and diameter at most $l$, then  
$$
\l_{h_1, h_2, \ldots, h_l}(H) = \nl_{h_1, h_2, \ldots, h_l}(H) = \ns_{h_1, h_2, \ldots, h_l}(H) = \s_{h_1, h_2, \ldots, h_l}(H) =  q_1 q_2 \ldots q_l - 1
$$ 
and any optimal no-hole $C(h_1, h_2, \ldots, h_l)$-labelling of $H$ is optimal for $\l_{h_1, h_2, \ldots, h_l}$, $\nl_{h_1, h_2, \ldots, h_l}$, $\s_{h_1, h_2, \ldots, h_l}$ and $\ns_{h_1, h_2, \ldots, h_l}$ simultaneously. 
\end{lemma}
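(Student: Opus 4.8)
The plan is to derive Lemma~\ref{lem:ham-cart} from Corollary~\ref{cor:chain} together with the chain of inequalities in Lemma~\ref{lem:chain}. The heart of the matter is the single hypothesis $\ns_{h_1,\ldots,h_l}(H)\le q_1q_2\cdots q_l-1$, which I want to upgrade to an \emph{equality} for all four invariants. First I would establish the trivial lower bound: by hypothesis $H$ contains a subgraph $K$ with $|V(K)|=q_1q_2\cdots q_l$ and $\diam(K)\le l$. Since $h_1\ge h_2\ge\cdots\ge h_l\ge 1$ and all pairs of distinct vertices of $K$ are at distance between $1$ and $l$ in $K$, any $L(h_1,\ldots,h_l)$-labelling of $K$ must assign distinct labels to the $|V(K)|$ vertices of $K$ (the constraint at any realised distance $i\le l$ is $\ge h_i\ge 1$), so its span is at least $|V(K)|-1=q_1q_2\cdots q_l-1$. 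The same argument applied to the $k$-cyclic distance shows any $C(h_1,\ldots,h_l)$-labelling of $K$ needs span $k\ge q_1q_2\cdots q_l$, hence $\sigma_{h_1,\ldots,h_l}(K)\ge q_1q_2\cdots q_l-1$. By monotonicity of the four invariants under taking subgraphs (recorded in Subsection~\ref{subsec:4p}, valid since $h_1\ge\cdots\ge h_l$), passing from $K$ to $H$ gives
$$
\l_{h_1,\ldots,h_l}(H)\ge q_1q_2\cdots q_l-1,\qquad \s_{h_1,\ldots,h_l}(H)\ge q_1q_2\cdots q_l-1.
$$
Combining the first of these with $\l_{h_1,\ldots,h_l}(H)\le\nl_{h_1,\ldots,h_l}(H)\le\ns_{h_1,\ldots,h_l}(H)\le q_1q_2\cdots q_l-1$ from \eqref{eq:nls} and the hypothesis forces
$$
\l_{h_1,\ldots,h_l}(H)=\nl_{h_1,\ldots,h_l}(H)=\ns_{h_1,\ldots,h_l}(H)=q_1q_2\cdots q_l-1.
$$

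Next I would invoke Corollary~\ref{cor:chain} directly: we have just shown $\l_{h_1,\ldots,h_l}(H)=\ns_{h_1,\ldots,h_l}(H)$, which is exactly the hypothesis of that corollary, so it yields in one stroke that all four invariants of $H$ coincide and equal $q_1q_2\cdots q_l-1$, and moreover that any optimal no-hole $C(h_1,\ldots,h_l)$-labelling of $H$ is simultaneously optimal for $\l_{h_1,\ldots,h_l}$, $\nl_{h_1,\ldots,h_l}$, $\ns_{h_1,\ldots,h_l}$ and $\s_{h_1,\ldots,h_l}$. (For completeness I should note that the hypothesis $\ns_{h_1,\ldots,h_l}(H)\le q_1q_2\cdots q_l-1<\infty$ guarantees a no-hole cyclic labelling actually exists, so ``optimal no-hole $C$-labelling'' is not vacuous; its optimal span is $q_1q_2\cdots q_l-1$ by the displayed equalities.) This completes the argument, and the proof is short once the lower-bound observation is in place.

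There is no serious obstacle here; the only point requiring care is the elementary but essential observation that a graph of order $N$ and diameter at most $l$ forces span $\ge N-1$ (resp. cyclic span $\ge N$) under an $L(h_1,\ldots,h_l)$-labelling (resp. $C(h_1,\ldots,h_l)$-labelling) with all $h_i\ge 1$, because \emph{every} pair of distinct vertices sits at some distance $i$ with $1\le i\le l$ and hence is subject to a nontrivial separation constraint; this makes the labelling injective into $\{0,1,\ldots,\mathrm{span}\}$ (resp. into $\ZZZ/k\ZZZ$), forcing $\mathrm{span}+1\ge N$ (resp. $k\ge N$). Everything else is bookkeeping with the inequalities of Lemma~\ref{lem:chain} and an appeal to Corollary~\ref{cor:chain}; the substantive content — actually producing a labelling of $H$ witnessing $\ns_{h_1,\ldots,h_l}(H)\le q_1q_2\cdots q_l-1$ — is deferred to the hypothesis and will be supplied later (in the proof of Theorem~\ref{thm:cart}) under the arithmetic condition $q_1q_2\cdots q_{l-1}>3(\min\{q_1,\ldots,q_{l-1}\}+1)q$.
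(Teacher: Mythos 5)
Your proposal is correct and follows essentially the same route as the paper: the subgraph $K$ of order $q_1q_2\cdots q_l$ and diameter at most $l$ forces the lower bound $\l_{h_1,\ldots,h_l}(H)\ge q_1q_2\cdots q_l-1$, which together with the hypothesis $\ns_{h_1,\ldots,h_l}(H)\le q_1q_2\cdots q_l-1$ and \eqref{eq:nls} gives $\l_{h_1,\ldots,h_l}(H)=\ns_{h_1,\ldots,h_l}(H)$, and Corollary~\ref{cor:chain} then finishes the argument. The only cosmetic difference is that you derive the lower bound on $K$ and invoke monotonicity (and also separately bound $\sigma$, which is unnecessary), whereas the paper argues directly that the vertices of $K$ get distinct labels in any labelling of $H$; this changes nothing of substance.
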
 

\begin{proof}
By our assumption, $H$ contains a subgraph $K$ with order $q_1 q_2 \ldots q_{l}$ and diameter at most $l$. The vertices of $K$ must receive distinct labels under any $L(h_1, h_2, \ldots, h_l)$-labelling of $H$. Hence 
$$
\l_{h_1, h_2, \ldots, h_l}(H) \ge q_1 q_2 \ldots q_l - 1. 
$$
Since $\ns_{h_1, h_2, \ldots, h_l}(H) \le q_1 q_2 \ldots q_l - 1$ by our assumption, by (\ref{eq:nls}) we then have $\l_{h_1, h_2, \ldots, h_l}(H) = \ns_{h_1, h_2, \ldots, h_l}(H) = q_1 q_2 \ldots q_l - 1$. The result now follows from Corollary \ref{cor:chain} immediately. 
\qed 
\end{proof}

\section{Proofs of Theorems \ref{thm:cart} and \ref{thm:sand-cart}}
\label{sec:cart}

\subsection{A lemma}

\begin{lemma}
\label{lem:cart} 
Let $G_1, \ldots, G_{l-1}$ and $G$ be non-trivial graphs with orders $q_1, \ldots, q_{l-1}$ and $q$, respectively, and let $H = G_1 \Box \cdots \Box G_{l-1} \Box G$, where $l \ge 3$. If 
\begin{equation}
\label{eq:3q}
q_1 q_2 \ldots q_{l-1} > 3(\min\{q_1, \ldots, q_{l-1}\}+1)q, 
\end{equation}
then for any integer $q_{l}$ with $1 \le q_{l} \le q$, we have
$$
\ns_{q_{l},1,\ldots,1} (H)\le q_1 q_2 \ldots q_{l}-1,
$$
where $(q_{l},1,\ldots,1)$ is an $l$-tuple.  
\end{lemma}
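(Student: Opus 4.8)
The plan is to exhibit explicitly a no-hole $C(q_l,1,\ldots,1)$-labelling of $H$ with span $q_1q_2\cdots q_l$, which gives $\ns_{q_l,1,\ldots,1}(H)\le q_1q_2\cdots q_l-1$ directly. Write $N=q_1q_2\cdots q_{l-1}$ and $k=Nq_l$, identify $V(G_i)$ with $\mathbb Z_{q_i}$ for $1\le i\le l-1$, fix an enumeration $y_0,\ldots,y_{q-1}$ of $V(G)$, and write a vertex of $H$ as a pair $(b,t)$ with $b\in V(B)$, $B:=G_1\Box\cdots\Box G_{l-1}$, and $t\in\{0,\ldots,q-1\}$, so that $\dist_H((b,t),(b',t'))=\dist_B(b,b')+\dist_G(y_t,y_{t'})$. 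First I would search for a labelling of the shape
$$
\phi(b,t)\;=\;q_l\bigl(\mu(b)+t\bigr)+\delta(b)\pmod k ,
$$
where $\mu:V(B)\to\mathbb Z_N$ is a bijection and $\delta:V(B)\to\{0,1,\ldots,q_l-1\}$. Being surjective onto $\mathbb Z_k$, such a $\phi$ is automatically no-hole, so it would only remain to choose $\mu$ and $\delta$ so that $\phi$ is onto $\mathbb Z_k$ and satisfies the distance conditions.

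With this shape the $G$-direction looks after itself: two vertices $(b,t),(b,t')$ in a common $G$-layer differ in $\phi$ by $q_l(t-t')$, of cyclic distance $q_l|t-t'|_N=q_l|t-t'|$ in $\mathbb Z_k$ (here $|t-t'|<q<N/2$ by \eqref{eq:3q}), which is $\ge q_l$ whenever $t\ne t'$; thus all within-layer pairs — in particular all $G$-edges and all within-layer pairs at $H$-distance at most $l$ — are fine. Likewise, since $\mu$ is a bijection, $(b,t)$ and $(b',t)$ with $b\ne b'$ always receive distinct labels, which settles all cross-layer pairs with equal $G$-index. What remains is: (i) for every edge $b\sim_B b'$, $|\phi(b,t)-\phi(b',t)|_k\ge q_l$; (ii) for every pair $(b,t),(b',t')$ with $b\ne b'$, $t\ne t'$ and $\dist_B(b,b')+\dist_G(y_t,y_{t'})\le l$, $\phi(b,t)\ne\phi(b',t')$; and (iii) surjectivity of $\phi$.

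For (i), a direct computation gives $|\phi(b,t)-\phi(b',t)|_k\ge q_l$ as soon as $|\mu(b)-\mu(b')|_N\ge 2$, so it suffices to build $\mu$ pushing the two ends of each $B$-edge to cyclic distance at least $2$ in $\mathbb Z_N$ (if an edge with $|\mu(b)-\mu(b')|_N=1$ is unavoidable, one instead arranges $\delta$ to be monotone along it in the right direction). For (ii), if $\mu(b)-\mu(b')\not\equiv t'-t\pmod N$ then $\phi(b,t)\ne\phi(b',t')$ automatically because $|\delta(b)-\delta(b')|<q_l$; the only dangerous pairs are those with $\mu(b)-\mu(b')\equiv t'-t\pmod N$, which forces $|\mu(b)-\mu(b')|_N\le q-1$, and for these one needs $\delta(b)\ne\delta(b')$. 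So $\delta$ must properly colour, with $q_l$ colours, the ``conflict graph'' on $V(B)$ whose edges are the pairs $b,b'$ with $1\le\dist_B(b,b')\le l-1$ and $|\mu(b)-\mu(b')|_N\le q-1$, while at the same time being equidistributed in the sense that, for each colour $r$, the set $\mu(\delta^{-1}(r))$ meets every block of $q$ consecutive residues of $\mathbb Z_N$ — which is precisely (iii). The hypothesis $q_1q_2\cdots q_{l-1}>3(\min\{q_1,\ldots,q_{l-1}\}+1)q$ is exactly what creates enough room in $\mathbb Z_N$ to spread the $\mu$-images along coordinate directions and to keep $\delta$ equidistributed simultaneously; the constant $3$ and the factor $\min\{q_1,\ldots,q_{l-1}\}+1$ reflect the worst-case interaction between a coordinate block of smallest size and the $G$-direction of size $q$.

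The hard part is the joint construction of $\mu$ and $\delta$. One cannot hope to make the conflict graph empty: requiring $\mu$ to keep the ends of every $B$-path of length $\le l-1$ at cyclic distance exceeding $q-1$ is an antibandwidth-type demand that fails for general Cartesian products $B$ (e.g.\ hypercubes), so the argument must tolerate a small conflict graph and beat it using the freedom in $\delta$ together with the slack in \eqref{eq:3q}. I expect the natural candidate for $\mu$ to be a mixed-radix enumeration of $V(B)$ with each coordinate block dilated by a suitable factor, and for $\delta$ to be read off from the low-order part of $\mu$; the delicate step is verifying that this $\delta$ is at once a proper $q_l$-colouring of the conflict graph and equidistributed. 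Once $\mu$ and $\delta$ are in hand, checking (i)–(iii) reduces to routine modular arithmetic.
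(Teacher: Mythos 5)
Your write-up is a plan rather than a proof: the two objects that carry all of the difficulty, the bijection $\mu$ and the colouring $\delta$, are never constructed, and their existence is exactly where the hypothesis \eqref{eq:3q} has to do its work. What you do verify (the within-layer estimates, the reduction of (ii) to a proper $q_l$-colouring of a conflict graph, the identification of (iii) with an equidistribution property of the colour classes) is routine bookkeeping around the ansatz $\phi(b,t)=q_l(\mu(b)+t)+\delta(b)$; the decisive claim that a pair $(\mu,\delta)$ with properties (i)--(iii) exists is offered only as an expectation (``I expect the natural candidate\dots'', ``the delicate step is verifying\dots''), and the inequality \eqref{eq:3q} is invoked rhetorically (``creates enough room'') but never actually used in an argument. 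So the core of the lemma is missing.

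The gap is not merely one of detail, because your ansatz distributes the freedom differently from the paper and is considerably more rigid. You attach all adjustable data to the $N=q_1\cdots q_{l-1}$ vertices of $B=G_1\Box\cdots\Box G_{l-1}$ (one global bijection $\mu$ plus one colour $\delta(b)$ each), with the $G$-coordinate entering only through the fixed term $q_l t$; surjectivity then forces every colour class $\delta^{-1}(r)$ to meet, in $\mu$-image, every window of $q$ consecutive residues of $\mathbb{Z}_N$, and (ii) asks $\delta$ to properly colour a conflict graph whose maximum degree can be as large as $2q-2$ with only $q_l$ colours. Since the lemma allows $q_l$ to be as small as $1$, your reduction can demand an empty conflict graph, which no bijection $\mu$ can deliver once $\diam(B)\le l-1$ (as happens for products of complete graphs, the case relevant to the Hamming corollaries): some pair with consecutive $\mu$-values is always within $B$-distance $l-1$. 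So in part of the stated parameter range the ansatz itself, not just the missing verification, breaks down, and in the remaining range the joint antibandwidth/equidistributed-colouring problem you pose is genuinely hard. The paper's proof is organised the other way around: the natural mixed-radix labelling of $B$ (scaled by $q_l$, plus $x\bmod q_l$) is kept fixed on each $G$-layer, and a coordinatewise shift vector $(a_1(x),\ldots,a_{l-1}(x))$ is attached to each of the $q$ layers, chosen one layer at a time by induction; the hypothesis \eqref{eq:3q} enters precisely as the statement that the number of forbidden shifts at each step (bounded there by $3q+3q_{l-1}q$) is smaller than the $q_1\cdots q_{l-1}$ available ones. To complete your argument you would either have to solve the $(\mu,\delta)$ existence problem you have set up, or restructure it so that the free parameters are indexed by the $q$ layers of $G$, where a counting argument of the paper's type has a chance to close.
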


\begin{proof} 
Since $G_1, \ldots, G_{l-1}$ and $G$ are non-trivial graphs, their orders $q_1, \ldots, q_{l-1}$ and $q$ are no less than $2$. Since the Cartesian product is commutative, without loss of generality we may assume that $q_{l-1} = \min\{q_1, \ldots, q_{l-1}\}$, so \eqref{eq:3q} becomes $q_1 q_2 \ldots q_{l-1} > 3(q_{l-1}+1)q$. Denote the vertices of $G_i$ as 
$$
V(G_i)=\{0,1, \ldots, q_i-1\},\, 1\le i\le l-1
$$ 
and the vertices of $G$ as 
$$
V(G)=\{0,1,\ldots,q-1\}.
$$ 
Then 
$$
V(H) = \{(x_1, \ldots, x_{l-1},x): 0 \le x_i \le q_i-1 \text{ for } 1\le i\le l-1, 0 \le x \le q-1\}. 
$$
Let $q_{l}$ be an integer with $1 \le q_{l} \le q$. Set  
$$
N_i=\prod_{j=i}^{l}q_j, \, 1\le i\le l,
$$ 
and 
$$
N_{l+1}=1.
$$  
Obviously, for any $t$ with $1 \le t \le l$, every integer in the interval $[0, q_1 q_2 \ldots q_{t}-1]$ can be uniquely expressed as 
\begin{equation}
\label{eq:x1}
x_1 q_{2} \ldots q_{t} + x_2 q_{3} \ldots q_{t} + \cdots + x_{t-1} q_{t} + x_t,\; 0\le x_i\le q_i-1 \text{ for } 1 \le i \le t.
\end{equation}
Conversely, any integer of this form is in $[0, q_1 \ldots q_{t} - 1]$. This establishes a bijection between the integers in $[0, q_1 q_2 \ldots q_{t} - 1]$ and the vectors $(x_1,\ldots,x_{t})$ of integers with $0\le x_i\le q_i-1$ for $1 \le i \le t$.  In particular, every integer in $[0,N_1-1]$ can be uniquely written as 
\begin{equation}
\label{eq:x}
\sum_{i=1}^{l} x_iN_{i+1},\; 0\le x_i\le q_i-1 \text{ for } 1 \le i \le l
\end{equation}
and conversely any integer of this form is in $[0,N_1-1]$. In this way we establish a bijection between the integers in $[0,N_1-1]$ and the vectors $(x_1,\ldots,x_{l})$ of integers with $0\le x_i\le q_i-1$ for $1 \le i \le l$. 

For each $x\in V(G)$, we define 
$$
r(x) \equiv x \pmod {q_{l}}
$$
to be the unique integer in $\{0,1, \ldots, q_{l}-1\}$ congruent to $x$ modulo $q_{l}$. The uniqueness of $r(x)$ is due to the assumption that $q_{l} \le q$. Since $q < 3(q_{l-1}+1)q < q_1 q_2 \ldots q_{l-1} \le N_1$, every integer $x \in V(G)$ can be expressed in the form of \eqref{eq:x} and moreover $r(x) = x_{l}$. 

For each integer $t$ between $0$ and $q-1$, consider a set
\begin{equation}
\label{eq:A}
A_t = \{(a_1(x), \ldots, a_{l-1}(x)): 0 \le x \le t\}
\end{equation}
of vectors $(a_1(x), \ldots, a_{l-1}(x))$ of integers such that $0\le a_{i}(x)\le q_i-1$ for $1 \le i \le l-1$ and $0 \le x \le t$. For $(x_1, \ldots, x_{l-1}, x) \in V(H)$ with $0 \le x \le t$, define
$$
\phi_{A_t} (x_1,\ldots, x_{l-1},x)=\sum_{i=1}^{l-1}\big((a_{i}(x)+x_i)~\mod~{q_{i}}\big)N_{i+1} + r(x).
$$
The most important ingredient of this proof is the following statement.

\smallskip
\textsc{Claim 1:} For each integer $t$ between $0$ and $q-1$, there exists a set $A_t$ of vectors as in \eqref{eq:A} such that, for any pair of distinct vertices $\x = (x_1, \ldots, x_{l-1},x)$, $\y = (y_1, \ldots, y_{l-1}, y)$ of $H$ with $0 \le x,y \le t$, we have
\begin{equation}\label{eq:a1}
|\phi_{A_t}(\x)-\phi_{A_t} (\y)|_{N_1}\ge q_{l} \; \text{ if } \; \dist_{H}(\x,\y)=1
\end{equation}
and
\begin{equation}\label{eq:a2}
|\phi_{A_t}(\x)-\phi_{A_t}(\y)|_{N_1}\ge 1 \; \text{ if } \; 1 < \dist_{H}(\x,\y) \le l.
\end{equation}

In fact, once this is proved, we then obtain that $\phi_{A_{q-1}}$ is a no-hole $C(q_{l}, 1, \ldots, 1)$-labelling of $H$ with span $N_1$ and hence $\ns_{q_{l},1,\ldots,1} (H) \le N_1 - 1 = q_1 q_2 \ldots q_{l}-1$ as desired. 

We prove Claim 1 by induction on $t$. In the case when $t = 0$, we set $(a_1(0), \ldots, a_{l-1}(0))$ $= (0, \ldots, 0)$ so that $A_0 = \{(0, \ldots, 0)\}$. In this case we have $x=y=0$ and so $r(x)=r(y)=0$. Thus, for $\x \ne \y$, $\phi_{A_0}(\x) = \sum_{i=1}^{l-1} x_i N_{i+1}$ and $\phi_{A_0} (\y) = \sum_{i=1}^{l-1} y_i N_{i+1}$ are distinct multiples of $q_{l}$, which implies that $|\phi_{A_0} (\x)-\phi_{A_0} (\y)|_{N_1}\ge q_{l}$. Therefore, both \eqref{eq:a1} and \eqref{eq:a2} are satisfied by distinct vertices $\x, \y$ with $x=y=0$.

Let $0 < t < q-1$. Assume that the statement in Claim 1 holds for nonnegative integers smaller than $t$. So in particular the existence of $A_{t-1}$ is assumed. We will prove that there exists a vector $(a_1(t), \ldots, a_{l-1}(t))$ of integers with $0 \le a_{i}(t) \le q_i-1$ for each $i$ such that if we set 
\begin{equation}
\label{eq:at}
A_{t} = A_{t-1} \cup \{(a_1(t), \ldots, a_{l-1}(t))\}
\end{equation}
then conditions \eqref{eq:a1} and \eqref{eq:a2} are satisfied by $A_{t}$ and any pair of distinct vertices $\x=(x_1, \ldots, x_{l-1}, x)$, $\y=(y_1, \ldots, y_{l-1}, y)$ of $H$ with $0 \le x,y \le t$. Without loss of generality we may assume that $y \le x$. If $x \le t-1$, then for any choice of $(a_1(t), \ldots, a_{l-1}(t))$ the set $A_{t}$ given in \eqref{eq:at} satisfies $\phi_{A_{t}}(\x) = \phi_{A_{t-1}}(\x)$ and $\phi_{A_{t}}(\y) = \phi_{A_{t-1}}(\y)$. Thus, by our hypothesis, any pair of distinct vertices $\x, \y$ with $0 \le y\le x \le t-1$ satisfies \eqref{eq:a1} and \eqref{eq:a2}, regardless of the choice of $(a_1(t), \ldots, a_{l-1}(t))$. So in what follows we only consider pairs of distinct vertices $\x , \y$ of $H$ with $0 \le y\le x = t$.  

\smallskip
\textsc{Case 1.} $(x_1,\ldots,x_{l-1})= (y_1,\ldots,y_{l-1})$.  
\smallskip

In this case we have $0 \le y < x = t$ and hence the vector $(a_1(y), \ldots, a_{l-1}(y)) \in A_{t-1}$ has been defined already by our hypothesis. Set
$$
\psi_y(z_1,\ldots, z_{l-1}) = \sum_{i=1}^{l-1} \big((z_i-a_{i} (y))~\mod~{q_i}\big)N_{i+1},
$$
where $0 \le z_i \le q_i - 1$ for each $i$. Then $\psi_y(z_1,\ldots, z_{l-1})$ is a multiple of $q_{l}$. Just as \eqref{eq:x1} defines a bijection, one can see that $\psi_y/q_{l}$ is a bijection from the set of vectors $(z_1,\ldots, z_{l-1})$ to the integer interval $[0, (N_1/q_{l}) - 1]$. If we set $A_t = A_{t-1} \cup \{(z_1,\ldots, z_{l-1})\}$, then 
$$
\phi_{A_t} (\x)-\phi_{A_t} (\y)=\psi_y(z_1,\ldots, z_{l-1}) + r(x) - r(y).
$$
Since $|r(x) - r(y)|_{N_1}<q_{l}$, if $(z_1,\ldots, z_{l-1})$ is chosen in such a way that
\begin{equation}\label{eq:psi1}
|\psi_y(z_1,\ldots, z_{l-1})|_{N_1}\ge 2q_{l},
\end{equation} 
then conditions \eqref{eq:a1} and \eqref{eq:a2} are satisfied by $A_t$ and all pairs of distinct vertices $\x , \y \in V(H)$ with $(x_1,\ldots,x_{l-1})= (y_1,\ldots,y_{l-1})$ and $0 \le y < x = t$. There are $N_1/q_{l}$ choices for $(z_1,\ldots, z_{l-1})$ which give rise to pairwise distinct integer values of $\psi_y(z_1,\ldots z_{l-1})/q_{l}$ ranging from $0$ to $(N_1/q_{l}) - 1$. So for each $y$ there are three choices for $(z_1,\ldots z_{l-1})$ which violate \eqref{eq:psi1}, namely when $\psi_y (z_1,\ldots z_{l-1})/q_{l}$ takes values $0$, $1$ or $(N_1/q_{l}) - 1$. Since $y$ ranges from $0$ to $t-1$, we see that in total there are at most $3t$ ($< 3q$) choices for $(z_1,\ldots z_{l-1})$ which violate \eqref{eq:psi1} for some $y$. Therefore, there are at most $3q$ choices for $(z_1,\ldots z_{l-1})$ such that \eqref{eq:a1} or \eqref{eq:a2} is violated for some pair of distinct $\x, \y$ with $(x_1,\ldots,x_{l-1})= (y_1,\ldots,y_{l-1})$ and $0 \le y < x = t$. 

\smallskip
\textsc{Case 2.} $(x_1,\ldots ,x_{l-1})\neq (y_1,\ldots ,y_{l-1})$.   
\smallskip

Consider $A_t = A_{t-1} \cup \{(z_1,\ldots, z_{l-1})\}$, where $0 \le z_i \le q_i - 1$ for each $i$. If $x=y$, then
$$
\phi_{A_t} (\x)-\phi_{A_t} (\y) = \sum_{i=1}^{l-1} \big((x_i-y_i)~\mod~{q_i}\big) N_{i+1}.
$$
By \eqref{eq:x1}, $\phi_{A_t} (\x)-\phi_{A_t} (\y)$ is a multiple of $q_{l}$ and is $q_{l}$ apart from $0$ and $N_1$. So conditions \eqref{eq:a1} and \eqref{eq:a2} are satisfied by $A_t$ and the pair $\x, \y$, regardless of the choice of $(z_1,\ldots,z_{l-1})$. 

Now suppose that $y<x=t$. (Recall that we assumed $x = t$ at the end of the paragraph containing \eqref{eq:at}.) Then $d(\x,\y)\ge 2$ and condition \eqref{eq:a1} is not required for $\x$ and $\y$. We have
\begin{eqnarray}
\phi_{A_t} (\x)-\phi_{A_t} (\y)  
& = & \left\{\sum_{i=1}^{l-1} \big((z_i+x_i)~\mod~{q_i}-(a_i(y)+y_i)~\mod~{q_i}\big) q_{i+1} \ldots q_{l-1}\right\} q_{l} \nonumber \\
& & +\ r(x)-r(y). \label{eq:disa}
\end{eqnarray}
Note that $|r(x)-r(y)| \le q_{l} - 1$. Thus, if $((z_1 + x_1)~\mod~{q_1}, \ldots, (z_{l-1} + x_{l-1}) ~\mod~{q_{l-1}})$ disagrees with $((a_{1}(y)+y_1)~\mod~{q_1}, \ldots, (a_{l-1}(y)+y_{l-1})~\mod~{q_{l-1}})$ in at least two coordinates or in the $i$-th coordinate only for some $1 \le i \le l-2$, then the absolute value of the first term on the right hand side of \eqref{eq:disa} is no less than $2q_{l}$ and so condition \eqref{eq:a1} is satisfied by $\x$ and $\y$. The same statement holds if these two vectors disagree in the $(l-1)$-th coordinate only but $|(z_{l-1} + x_{l-1})~\mod~{q_{l-1}} - (a_{l-1}(y)+y_{l-1})~\mod~{q_{l-1}}| \ge 2$. On the other hand, for a fixed $y$, there are at most three choices for $z_{l-1}$ such that $|(z_{l-1} + x_{l-1})~\mod~{q_{l-1}} - (a_{l-1}(y)+y_{l-1})~\mod~{q_{l-1}}| \le 1$. Therefore,  for a fixed $\x$ and all $y \le t$, there are at most $3t$ ($\le 3q$) choices for $(z_1,\ldots, z_{l-1})$ such that \eqref{eq:a2} is violated. Since $x_{l-1}$ ranges from $0$ to $q_{l-1}-1$,  there are at most $3q_{l-1} q$ choices for $(z_1,\ldots, z_{l-1})$ such that \eqref{eq:a2} is violated by some pair $\x, \y$ with $(x_1,\ldots ,x_{l-1})\neq (y_1,\ldots ,y_{l-1})$ and $y < x = t$.

In summary, we have proved that there are at most $3q + 3q_{l-1} q$ choices for $(z_1,\ldots, z_{l-1})$ such that \eqref{eq:a1} or \eqref{eq:a2} is violated by some pair of distinct vertices $\x, \y$ of $H$. Since $N_1/q_{l} = q_1 q_2 \ldots q_{l-1} > 3(q_{l-1}+1)q$ by our assumption, among the $N_1/q_{l}$ choices for $(z_1,\ldots, z_{l-1})$ there exists at least one which can be set as $(a_1(t), \ldots, a_{l-1}(t))$ such that \eqref{eq:a1} and \eqref{eq:a2} are satisfied by $A_{t} = A_{t-1} \cup \{(a_1(t), \ldots, a_{l-1}(t))\}$ and all pairs of distinct vertices $\x=(x_1, \ldots, x_{l-1}, x)$, $\y=(y_1, \ldots, y_{l-1}, y)$ of $H$ with $0 \le x,y \le t$. By mathematical induction, we have proved Claim 1 and hence the lemma. 
\qed
\end{proof}

\subsection{Proofs of Theorems \ref{thm:cart} and \ref{thm:sand-cart}}
    
\begin{proof}{\textbf{of Theorem \ref{thm:cart}}}~~      
In the special case when $h = q_l$, the result follows from Lemmas \ref{lem:ham-cart} and \ref{lem:cart} immediately. Thus all equalities in \eqref{eq:hH} hold when $h = q_l$. 

Consider any integer $h$ with $1 \le h \le q_l$. By our assumption, $H$ contains a subgraph with order $q_1q_2 \ldots q_l$ and diameter at most $l$. All vertices of this subgraph should be assigned pairwise distinct labels under any $L(h, 1, \ldots, 1)$-labelling of $H$. So we have $\l_{h, 1, \ldots, 1}(H) \ge q_1q_2 \ldots q_l - 1$. On the other hand, $\l_{h, 1, \ldots, 1}(H) \le \l_{q_{l}, 1, \ldots, 1}(H) = q_1q_2 \ldots q_l - 1$ as $1 \le h \le q_l$. Therefore, $\l_{h, 1, \ldots, 1}(H) = q_1q_2 \ldots q_l - 1$. Since $h \le q_l$, we have $\ns_{h,1,\ldots,1}(H) \le \ns_{q_l,1,\ldots,1}(H) = q_1q_2 \ldots q_l - 1$. Combining this with \eqref{eq:ls} and \eqref{eq:nls-20}, we obtain $q_1q_2 \ldots q_l - 1 = \l_{h, 1, \ldots, 1}(H) \le \s_{h, 1, \ldots, 1}(H) \le \ns_{h, 1, \ldots, 1}(H) \le \ns_{q_{l}, 1, \ldots, 1}(H) = q_1q_2 \ldots q_l - 1$. Hence $\l_{h, 1, \ldots, 1}(H) = \ns_{h, 1, \ldots, 1}(H) = q_1q_2 \ldots q_l - 1$. It then follows from Corollary \ref{cor:chain} that all equalities in \eqref{eq:hH} hold for $h$ and any optimal no-hole $C(h, 1, \ldots, 1)$-labelling of $H$ is optimal for $\l_{h, 1, \ldots, 1}, \nl_{h, 1, \ldots, 1}, \ns_{h, 1, \ldots, 1}$ and $\s_{h, 1, \ldots, 1}$ simultaneously.
\qed 
\end{proof}

\begin{proof}{\textbf{of Theorem \ref{thm:sand-cart}}}~~      
Since $K \subseteq X \subseteq H$, for $\eta = \l_{h,1,\ldots,1}$ or $\s_{h,1,\ldots,1}$, we have $\eta(K) \le \eta(X) \le \eta(H)$ as $\eta$ is monotonically increasing. Moreover, $\eta(H) = q_1 q_2 \ldots q_l - 1$ by Theorem \ref{thm:cart}. Since $K$ has order $q_1 q_2 \ldots q_l$ and diameter at most $l$, we have $\l_{h,1,\ldots,1}(K) \ge q_1 q_2 \ldots q_l - 1$. This together with \eqref{eq:ls} implies that $\eta(K) \ge q_1 q_2 \ldots q_l - 1$ for $\eta = \l_{h,1,\ldots,1}$ or $\s_{h,1,\ldots,1}$. Combining this with $\eta(K) \le \eta(X) \le \eta(H) = q_1 q_2 \ldots q_l - 1$, we obtain that $\eta(X) = q_1 q_2 \ldots q_l - 1$ for $\eta = \l_{h,1,\ldots,1}$ or $\s_{h,1,\ldots,1}$. 

By Theorem \ref{thm:cart}, any optimal no-hole $L(h,1,\ldots,1)$- or $C(h,1,\ldots,1)$-labelling $\phi$ of $H$ has span $q_1 q_2 \ldots q_l - 1$. Since $K$ is a subgraph of $H$ with order $q_1 q_2 \ldots q_l$ and diameter at most $l$, all labels used by $\phi$ must appear in $K$. Since $K \subseteq X$, it follows that the restriction of $\phi$ to $V(X)$ is a no-hole $L(h,1,\ldots,1)$- or $C(h,1,\ldots,1)$-labelling of $X$. Thus $\eta(X) \le \eta(H) = q_1q_2 \ldots q_l - 1$ for $\eta = \nl_{h,1,\ldots,1}$ or $\ns_{h,1,\ldots,1}$. Similarly, we have $\eta(K) \le \eta(X)$ as $K \subseteq X$ and $\eta(X) \le q_1q_2 \ldots q_l - 1$. On the other hand, since $K$ has diameter at most $l$, we have $\eta(K) \ge q_1 q_2 \ldots q_l - 1$. Therefore, $\eta(X) = q_1q_2 \ldots q_l - 1$ for $\eta = \nl_{h,1,\ldots,1}$ or $\ns_{h,1,\ldots,1}$. Hence we have proved that all equalities in \eqref{eq:hX} hold. Moreover, one can see that the restriction to $V(X)$ of the optimal labelling in Theorem \ref{thm:cart} is a labelling of $X$ that is optimal for $\l_{h,1,\ldots,1}, \nl_{h,1,\ldots,1}, \ns_{h,1,\ldots,1}$ and $\s_{h,1,\ldots,1}$ simultaneously.

Since $K$ has order $q_1q_2 \ldots q_l$ and diameter at most $l$, we have $K_{q_1q_2 \ldots q_l} \cong K^l \subseteq X^l \subseteq H^l$. Since $\chi(K_{q_1q_2 \ldots q_l}) = q_1q_2 \ldots q_l$, and $\chi(H^l) = q_1q_2 \ldots q_l$ by Theorem \ref{thm:cart}, it follows that $\chi(X^l) = q_1q_2 \ldots q_l$ and the same labelling as above gives rise to an optimal colouring of $X^l$.
\qed 
\end{proof}

\section{Concluding remarks}
\label{sec:rem}

It is not difficult to construct many graphs other than Hamming graphs which satisfy the conditions of Theorem \ref{thm:cart}, and we give a simple construction here. Let $q_1, \ldots, q_{l-1}$ be integers no less than $2$ such that $q_{l-1} = \min\{q_1, \ldots, q_{l-1}\}$ and $3q_{l-1}(q_{l-1}+1) < q_1$, where $l \ge 3$. Let $q_l$ be an integer between $1$ and $q_{l-1}$. Then $q_2 \ldots q_{l-1}q_{l} < (q_1 q_2 \ldots q_{l-2} q_{l})/(3(q_{l-1}+1))$. Take an integer $q$ such that $q_2 \ldots q_{l-1}q_{l} \le q < (q_1 q_2 \ldots q_{l-2} q_{l})/(3(q_{l-1}+1))$. Then $1 \le q_{l} \le q_{l-1} < q$ and $3(q_{l-1}+1)q < q_1 q_2 \ldots q_{l-2} q_{l} \le q_1 q_2 \ldots q_{l-2} q_{l-1}$. Let $G_1 = K_{q_1}$, and let $G_2, \ldots, G_{l-1}$ be graphs with orders $q_2, \ldots, q_{l-1}$, respectively. Let $G$ be a graph with order $q$ which contains a subgraph $G^*$ with order $q_2 \ldots q_{l-1}q_{l}$ and diameter $l-1$. (There are many graphs $G$ satisfying these conditions.) Let $H = G_1 \Box \cdots \Box G_{l-1} \Box G$ and $K = G_1 \Box G^*$. Then $K$ is a subgraph of $H$ with order $q_1 q_2 \ldots q_{l-1}q_{l}$ and diameter $\diam(K) = \diam(K_{q_1}) + \diam(G^*) = 1 + (l-1) = l$.
So all conditions in Theorem \ref{thm:cart} are satisfied but $H$ is not necessarily a Hamming graph. 

A question related to the main contributions in this paper is to assess the strength of the sufficient condition on the sizes of the factors of Cartesian products of graphs in Theorem \ref{thm:cart} and its corollaries. In the case of the Hamming graph $H_{q_1, q_2, \ldots, q_d}$, there are exactly $\sum_{i=1}^{l_0} \left(\sum_{S \subseteq \{1, \ldots, d\}, |S|=i}\; \prod_{j \in S} (q_j-1)\right) + 1$ vertices of $H_{q_1, q_2, \ldots, q_d}$ at distance no more than $l_0 = \lfloor l/2 \rfloor$ from a fixed vertex, and these vertices require pairwise distinct labels in any $L(h,1, \ldots, 1)$-labelling. Thus, for the conclusion of Corollary \ref{cor:Lh21} to hold, a necessary condition is
$$
q_1 q_2 \ldots q_l \ge \sum_{i=1}^{l_0} \left(\sum_{S \subseteq \{1, \ldots, d\}, |S|=i}\; \prod_{j \in S} (q_j-1)\right).
$$
Moreover, since in any $L(h,1, \ldots, 1)$-labelling the $\sum_{i=1}^d (q_i-1)$ neighbours of the $0$-labelled vertex should receive pairwise distinct labels no less than $h$, for the conclusion of Corollary \ref{cor:Lh21} to hold, another necessary condition is
$$
q_1 q_2 \ldots q_l \ge h + \sum_{i=1}^d (q_i-1).
$$
Both conditions are met under the assumptions of Corollary \ref{cor:Lh21}. However, it is not clear whether these two obvious necessary conditions are also sufficient for the conclusion of Corollary \ref{cor:Lh21}. 

In this dirction it would be interesting to study the following problems (see \cite[Question 6.1]{Zhou} and \cite[Question 5.1]{CLZ-1} for two related questions for distance-$2$ labellings of Hamming graphs): 

\begin{problem}
Let $q_1\ge q_2\ge \cdots\ge q_d$ be integers no less than $2$, and let $l$ be an integer with $3 \le l \le d$. Let $(h,1,\ldots ,1)$ be an $l$-tuple with $h$ a positive integer. 
\begin{itemize}
\item[\rm (a)] Give necessary and sufficient conditions for $\l_{h,1, \ldots, 1}(H_{q_1, q_2, \ldots, q_d}) = \nl_{h,1, \ldots, 1}(H_{q_1, q_2, \ldots, q_d}) = \ns_{h,1, \ldots, 1}(H_{q_1, q_2, \ldots, q_d}) = \s_{h,1, \ldots, 1}(H_{q_1, q_2, \ldots, q_d}) = q_1 q_2 \ldots q_l - 1$ to hold. 
\item[\rm (b)] Give necessary and sufficient conditions for $\l_{h,1, \ldots, 1}(H_{q_1, q_2, \ldots, q_d}) = q_1 q_2 \ldots q_l - 1$ to hold.
\end{itemize}
\end{problem}

\section*{Acknowledgements}

We would like to thank the anonymous referees for their helpful comments. Anna Llad\'o and Oriol Serra acknowledge financial support from the Spanish {\it Agencia Estatal de Investigaci\'on} under project  MTM2017-82166-P. Zhou was supported by the Research Grant Support Scheme of The University of Melbourne.
     
\small { 

} 

\end{document}